\newcommand{\F}{\mathbb {F}}
\newtheorem{theorem}{Theorem}[section]
\newtheorem{definition}[theorem]{Definition}
\newtheorem{lemma}[theorem]{Lemma}
\newtheorem{proposition}[theorem]{Proposition}
\newtheorem{remark}[theorem]{Remark}
\newtheorem{example}[theorem]{Example}
\DeclareMathOperator{\Div}{Div}
\title{Subcovers and codes on the $X_{n,r}$ curves}
\author[borges]{H. Borges}
\author[castellanos]{A. S. Castellanos}
\author[tizziotti]{G. Tizziotti}
\begin{document}
\maketitle

\begin{abstract}
In this work, subcovers $\mathcal{X}_{n,r}^s$ of the curve $\mathcal{X}_{n,r}$ are constructed, the Weierstrass semigroup $H(P_\infty)$ at the point $P_\infty \in \mathcal{X}_{n,r}^s$  is determined and   the co\-rresponding one-point AG codes are investigated. Codes establishing new records on the parameters with respect to the previously known ones are discovered, and $108$ improvements on MinT tables are obtained.

\end{abstract}


\section{Introduction}

Let  $\mathbb{F}_q$ be the finite field with $q$ elements, where $q$ is a power of a prime $p$, and let
$\mathcal{X}$ be  a (projective, geometrically irreducible, algebraic) curve defined over $\mathbb{F}_q$.
%

In  \cite{c2},  Borges and Concei\c c\~ao  introduced the  curve $\mathcal{X}_{n,r}$, which is the  curve defined over $\mathbb{F}_{q^n}$  with   affine equation  given by
\[T_n(y)=f_r(x)\;,\]
where $n\geq 2$, $r\in \{\lceil \frac{n}{2}\rceil,\ldots, n-1\}$ with $\mbox{gcd}(n,r)=1$, $T_n(z)=z+z^q+\cdots +z^{q^{n-1}}$, and $f_r(x):=T_n(x^{1+q^r})\pmod{x^{q^n}-x}$.


The curve   $\mathcal{X}_{n,r}$ has   interesting  arithmetic  and geometric  properties as  it is an  $\F_{q^n}$-Frobenius nonclassical   Castle curve,  with  a large number of  automorphisms (see \cite{c2}, \cite{bst}). It was observed in  \cite[Corollary 3.5]{bst} that  $\mathcal{X}_{n,r}$
has a simpler plane model   with   affine equation  given by
\begin{equation}\label{simplemodel}
y^{q^{n-1}}+\cdots+y^q+y=x^{q^{n-r}+1}-x^{q^{n}+q^{n-r}}.
\end{equation}
In this paper, for $s=1,2,\ldots,n-1$, we will construct subcovers of  \eqref{simplemodel} given by
\begin{equation}\label{covermodel}
\mathcal{X}_{n,r}^s: \  g_s(y)=x^{q^{n-r}+1}-x^{q^{n}+q^{n-r}}.
\end{equation}
where $g_s(y)=y^{q^s}+a_{s-1}y^{q^{s-1}}+\cdots+a_0y \in \F_{q^n}[x]$ are separable $q$-polynomials. Then   some properties  of $\mathcal{X}_{n,r}^s$, such as their  number of $\F_{q^n}$-rational points and genus, as well as   the Weierstrass semigroups $H(P_\infty)$ at their only  point at infinity $P_\infty$ are investigated.

 Weierstrass semigroup is a classical object in the theory of algebraic curves  that
  is related to both  theoretical and applied topics.   Its particular connection with  Algebraic Geometric (AG) codes, introduced by Goppa in   \cite{Goppa1}, \cite{Goppa2},  led to Weierstrass semigroups  being  extensively studied over past decades (see e.g. \cite{fanali}, \cite{gretchen}, \cite{gretchen2}, \cite{munuera} and \cite{ST}).
Our investigation of the semigroups $H(P_\infty)$ will lead  to  the construction of AG codes from the curve $\mathcal{X}_{n,r}$  and its subcovers, with  parameters  giving rise to  new records.

The  paper is organized as follows.  Section 2  presents  notations and results regarding   Weierstrass semigroups and AG codes. Section 3 is devoted to constructing subcovers of  curve $\mathcal{X}_{n,r}$ and to computing their genus, number of $\F_{q^n}$-rational points, and  the Weierstrass semigroup $H(P_{\infty})$ for a certain class of  subcovers $\mathcal{X}_{n,r}^{s}$.  Finally, in  Section 4,  we investigate AG codes arising from $\mathcal{X}_{n,r}$ and $\mathcal{X}_{n,r}^{s}$  and then codes attaining new records for  their parameters are presented.

\begin{center}
\bf{Notation}
\end{center}

The following notation will be used throughout this text.

\begin{itemize}
\item $\mathbb{F}_q$ is the finite field with $q=p^m$ elements, where $p$ is a prime number.
\item $\mathcal{X}$ is  a curve defined over   $\mathbb{F}_q$.
\item $\# \mathcal{X}(\mathbb{F}_q)$ denotes the  number of $\mathbb{F}_q$-rational points, or simply rational  points, on $\mathcal{X}$.
\item $\mathbb{F}_q(\mathcal{X})$ is  the field of rational functions and $\mbox{Div}(\mathcal{X})$ is the set of divisors on $\mathcal{X}$.
\item For $f \in \mathbb{F}_q(\mathcal{X})$,  the zero and the pole divisor of $f$  are denoted by  $(f)_0$ and $(f)_{\infty}$, respectively.
\item For $D \in \mbox{Div}(\mathcal{X})$, the support of $D$ is denoted by  $\mbox{Supp}(D)$, and the  Riemann-Roch space associated to $D$  is  $\mathcal{L}(D):= \{ f \in \mathbb{F}_{q}(\mathcal{X}) \mbox{ : } (f) + D \geq 0 \} \cup \{ 0 \}$.
\item The  dimension of $\mathcal{L}(D)$ is  denoted by $\ell(D)$.
\item $\Omega(G)$ denotes  the space of differentials $\eta$ on $\mathcal{X}$ such that $\eta=0$ or $\mbox{div}(\eta)\geq G$, where $\mbox{div}(\eta)= \sum_{P\in \mathcal{X}}\mbox{ord}_{P}(\eta)P$ and $\mbox{ord}_P(\eta)$ is the order of $\eta$ at $P$.
\item  For $P$ on $\mathcal{X}$, $\mbox{v}_P$ is the discrete valuation at $P$, and for a differential $\eta$ on $\mathcal{X}$, $\mbox{res}_{P}(\eta)$ is the residue of $\eta$ at $P$.
\end{itemize}


\section{Preliminaries}

%

\subsection{Weierstrass Semigroup} Let $P$ be a rational point on $\mathcal{X}$ and $\mathbb{N}_{0}$ be the set of non-negative integers. The set
$$
H(P):= \{n \in \mathbb{N}_{0} \mbox{ : } \exists f \in \mathbb{F}_{q}(\mathcal{X}) \mbox{ with } (f)_{\infty} = n P \}
$$
is a numerical semigroup, called the \textit{Weierstrass semigroup} of $\mathcal{X}$ at $P$. The set $G(P) = \mathbb{N}_{0} \setminus H(P)$ is called  the  \textit{Weierstrass gap set} of $P$. By Weierstrass Gap Theorem, the cardinality of $G(P)$ is $g=g(\mathcal{X})$, the genus of the curve. In addition, $G(P)=\{\alpha_1,\ldots, \alpha_{g}\}$ with $1=\alpha_1 < \cdots < \alpha_{g} \leq 2g-1$. The semigroup $H(P)$ is called \textit{symmetric} if $\alpha_g = 2g-1$. Every semigroup generated by two elements is symmetric  \cite{rosales}. The curve $\mathcal{X}$ is called \emph{Castle curve} if $H(P)= \{ 0=m_1 < m_2 < \cdots \}$ is symmetric and $\# \mathcal{X}(\mathbb{F}_q)= m_2 q + 1$.
Details about Castle curves and their application in coding theory can be found in \cite{MST}.

\begin{definition}\label{telescopic}
Let $(a_{1}, \ldots , a_m)$ be a sequence of positive integers whose  greatest common divisor is $1$. Define $d_i := gcd (a_1, \ldots , a_i)$ and $A_i:= \{ \frac{a_1}{d_i}, \ldots , \frac{a_i}{d_i} \}$, for $i=1,\ldots,m$. Let $d_0=0$. Let $S_i$ be the semigroup generated by $A_i$. If $\frac{a_i}{d_i} \in S_{i-1}$, for $i=2, \ldots , m$, then the sequence $(a_1 , \ldots , a_m)$ is called \textit{telescopic}. We call a semigroup telescopic if
it is generated by a telescopic sequence.
\end{definition}

For a numerical semigroup $S$, the number of gaps and the largest gap of $S$ will be denoted by $g(S)$ and $l_g(S)$, respectively.

\begin{lemma} [\cite{kirfel}, Lemma 6.5] \label{lemma telescopic}
If $S_m$ is  the semigroup generated by a telescopic sequence $(a_1 , \ldots, a_m)$, then
\begin{itemize}

\item $\displaystyle l_g(S_m) = d_{m-1} l_g(S_{m-1}) + (d_{m-1}-1)a_{m} = \sum_{i=1}^{m} \left(\frac{d_{i-1}}{d_i}-1\right)a_i$

\item $g(S_m)= d_{m-1} g(S_{m-1}) + (d_{m-1}-1)(a_{m}-1)/2 = (l_{g}(S_m) + 1)/2,$

\end{itemize}
where  $d_{0}=0$. In particular, telescopic semigroups are symmetric.
\end{lemma}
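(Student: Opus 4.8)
The plan is to argue by induction on $m$, reducing everything to one structural fact about telescopic sequences. First I would record the bookkeeping identities that drive the induction: $d_1=a_1$, $d_m=1$, $\langle a_1,\dots,a_{m-1}\rangle=d_{m-1}S_{m-1}$, and $\gcd(d_{m-1},a_m)=d_m=1$. I would also check that the rescaled sequence $(a_1/d_{m-1},\dots,a_{m-1}/d_{m-1})$ is again telescopic, with associated gcd's $d_i/d_{m-1}$ and with the very same lower semigroups $S_i$ for $i\le m-1$; this is what makes the inductive hypothesis available for $S_{m-1}$, including the symmetry statement and the formula $l_g(S_{m-1})=(\text{largest gap of }S_{m-1})$.

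The crux is a bounded-representation lemma: every $n\in S_m$ has a unique expression $n=\sum_{i=1}^m c_i a_i$ with $c_1\ge 0$ and $0\le c_i\le \frac{d_{i-1}}{d_i}-1$ for $2\le i\le m$. For existence I would begin with an arbitrary nonnegative representation and reduce greedily: the telescopic hypothesis $a_i/d_i\in S_{i-1}$ gives $\frac{d_{i-1}}{d_i}a_i\in d_{i-1}S_{i-1}=\langle a_1,\dots,a_{i-1}\rangle$, so whenever $c_i\ge \frac{d_{i-1}}{d_i}$ (taking the largest such $i$) one may trade $\frac{d_{i-1}}{d_i}$ copies of $a_i$ for a combination of $a_1,\dots,a_{i-1}$, and a descent argument shows this terminates with all coefficients in range. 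For uniqueness I would reduce modulo $d_{m-1}$: every $a_j$ with $j<m$ is divisible by $d_{m-1}$ while $\gcd(a_m,d_{m-1})=1$, so the top coefficient $c_m$ is forced in $\{0,\dots,d_{m-1}-1\}$; cancelling $c_m a_m$, dividing by $d_{m-1}$, and invoking uniqueness for $S_{m-1}$ completes it. This lemma is the main obstacle, since both halves depend on deploying the telescopic condition in precisely the right form.

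Granting the lemma, the representations with $c_1=0$, namely $B=\{\sum_{i=2}^m c_i a_i:\ 0\le c_i\le \frac{d_{i-1}}{d_i}-1\}$, form a set of $\prod_{i=2}^m \frac{d_{i-1}}{d_i}=d_1/d_m=a_1$ elements of $S_m$ that are pairwise incongruent modulo $a_1$ (by uniqueness), and each is the least element of $S_m$ in its class, since any $n\in S_m$ satisfies $n=c_1a_1+\sum_{i\ge2}c_ia_i\ge \sum_{i\ge2}c_ia_i\in B$. Hence $B$ is exactly the Apéry set $\mathrm{Ap}(S_m,a_1)$. The standard Apéry formula for the Frobenius number then gives $l_g(S_m)=\max B-a_1=\sum_{i=2}^m(\frac{d_{i-1}}{d_i}-1)a_i-a_1=\sum_{i=1}^m(\frac{d_{i-1}}{d_i}-1)a_i$, and splitting off the top term, combined with $\langle a_1,\dots,a_{m-1}\rangle=d_{m-1}S_{m-1}$, rewrites this as the recursion $l_g(S_m)=d_{m-1}l_g(S_{m-1})+(d_{m-1}-1)a_m$.

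Finally, the box $B$ is invariant under the reflection $\sum c_ia_i\mapsto \sum(\frac{d_{i-1}}{d_i}-1-c_i)a_i=\max B-\sum c_ia_i$, which is exactly the condition on $\mathrm{Ap}(S_m,a_1)$ characterising symmetric numerical semigroups; hence $S_m$ is symmetric and $g(S_m)=(l_g(S_m)+1)/2$. The genus recursion then follows by substituting $g(S_{m-1})=(l_g(S_{m-1})+1)/2$ and the $l_g$-recursion into $g(S_m)=(l_g(S_m)+1)/2$ (alternatively one reads it off Selmer's formula $g(S_m)=\frac{1}{a_1}\sum_{b\in B}b-\frac{a_1-1}{2}$). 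I would close by remarking that the entire argument is the assertion that $S_m=d_{m-1}S_{m-1}+a_m\mathbb{N}_0$ is a gluing, so that the Frobenius and genus formulas for gluings deliver both recursions at once.
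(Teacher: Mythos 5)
Your proposal is correct. Note first that the paper offers no proof of this statement at all: it is imported verbatim as Lemma 6.5 of Kirfel--Pellikaan, so there is no internal argument to compare against. What you have written is essentially the standard (and, up to presentation, the original) proof: the unique bounded representation $n=\sum c_ia_i$ with $0\le c_i\le d_{i-1}/d_i-1$ for $i\ge 2$ is exactly the right key lemma, your existence argument (greedy reduction using $\tfrac{d_{i-1}}{d_i}a_i\in\langle a_1,\dots,a_{i-1}\rangle$, terminating by lexicographic descent on $(c_m,\dots,c_2)$) and uniqueness argument (reduction mod $d_{m-1}$ plus induction) are both sound, and the identification of the box $B$ with the Ap\'ery set $\mathrm{Ap}(S_m,a_1)$ correctly delivers the Frobenius number, the telescoping closed form, the recursion via $\langle a_1,\dots,a_{m-1}\rangle=d_{m-1}S_{m-1}$, and symmetry via the reflection $c_i\mapsto \tfrac{d_{i-1}}{d_i}-1-c_i$. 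The only caveats are cosmetic: the $i=1$ term of the closed form relies on the paper's convention $d_0=0$ (so that it contributes $-a_1$, matching $\max B-a_1$), and the recursion at $m=1$ is purely conventional since $S_1=\mathbb{N}_0$; you may also wish to state explicitly which standard facts you are quoting (the Ap\'ery-set formula for the Frobenius number, Selmer's genus formula, and the Ap\'ery-set characterisation of symmetric semigroups). Your closing remark that $S_m$ is a gluing of $S_{m-1}$ and $\mathbb{N}_0$ is the modern packaging of the same computation and is a perfectly acceptable alternative endgame.
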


\subsection{AG codes} Given a divisor $G$ on $\mathcal{X}$, for distinct rational points $P_{1}, P_{2}, \ldots , P_{n}$ on $\mathcal{X}$, with $P_{i} \notin \mbox{Supp}(G)$ for all $i$, consider the map
$$
ev_{\mathcal{L}}: \mathcal{L}(G) \rightarrow \mathbb{F}_{q}^{n} \mbox{ , } f \mapsto (f(P_{1}), f(P_{2}), \ldots , f(P_{n})).
$$

Let $D,G \in \Div(\mathcal{X})$ be such that $\mbox{Supp}(G) \cap \mbox{Supp}(D) = \emptyset$. The AG code, denoted by $C_{\mathcal{L}}(\mathcal{X},D,G)$, associated with the divisors $D$ and $G$, is defined as the image of the map $ev_{\mathcal{L}}$, that is, $C_{\mathcal{L}}(\mathcal{X},D,G):= ev_{\mathcal{L}}(\mathcal{L}(G))$.

Another code can be associated with the divisors $D$ and $G$,  by using the map $ev_{\Omega}: \Omega(G) \rightarrow F_{q}^{n} \mbox{ , } \omega \mapsto (res_{P_{1}}(\omega), res_{P_{2}}(\omega), \ldots , res_{P_{n}}(\omega))$. Define $C_{\Omega}(\mathcal{X},D,G):= ev_{\Omega}(\Omega(G-D))$. The code $C_{\Omega}(\mathcal{X},D,G)$  is  also called  AG, and  $C_{\mathcal{L}}(\mathcal{X},D,G)$ and $C_{\Omega}(\mathcal{X},D,G)$ are dual to each other. If $G=mQ$ for some rational point $Q$ on $\mathcal{X}$, where $m$ is an integer, then the codes $C_{\mathcal{L}}(\mathcal{X},D,mQ)$ and $C_{\Omega}(\mathcal{X},D,mQ)$ are called \textit{one-point AG codes}.

Let $[n,k,d]$, respectively,  be the length, dimension and minimum distance of an AG code. It is well known that determining  the parameter $d$ can be very challenging. Nevertheless, there are some  bounds in the literature, such as  the Singleton, Goppa and the order bound, from  which one  can estimate the minimum distance $d$. In this work, we will use a bound known as $d^*$. The bound $d^*$ was introduced by Andersen and Geil in \cite{andersen} and investigated by Geil, Munuera, Ruano, and Torres in \cite{GMRT}. The following is a brief introduction to  the bound $d^*$, which will be used  in the last section of this paper. For additional details, see \cite{GMRT}.

Consider the one-point AG code $C_{\mathcal{L}}(\mathcal{X},D,mQ)$, where $D=P_1 + \cdots + P_u$. Let $H(Q)= \{ 0=h_1 < h_2 < \cdots \}$ be the Weierstrass semigroup of $\mathcal{X}$ at $Q$. If $h_i < u=deg(D)$, then  $C_{\mathcal{L}}(\mathcal{X},D,h_iQ)$ has dimension  $k=i$. Consider the set
$$H^*=H^*(D,Q):=\{m\in \mathbb{N}_0: C_{\mathcal{L}}(\mathcal{X},D,mQ)\neq C_{\mathcal{L}}(\mathcal{X},D,(m-1)Q)\}\;.$$

Thus,  knowing the set $H^*$ is equivalent to knowing the dimension of all codes $C_{\mathcal{L}}(\mathcal{X},D, mQ)$. We observe that $H^*$ consists of $u$ elements and $H^* \subseteq H$. In addition, for $m < u$, $m \in H^*$ if and only if $m\in H$, see \cite{GMRT}. In particular, if $\mathcal{X}$ is a Castle curve, then $H^*=H(Q) \setminus (u+H(Q))$, see \cite[Example 3.8]{GMRT}.

Let $H^*=\{ 0=m_1 < m_2 < \cdots < m_u \}$. For $i=1,\dots,u$, let $\Lambda^*_i:=\{ m\in H^* : m-m_i \in H^* \}$ and define $d^*(i):=\min\{\#\Lambda^*_1,\dots,\#\Lambda^*_{i}\}$.

If $d$ is the minimum distance of $C_{\mathcal{L}}(\mathcal{X},D,m_iQ)$, then $d \geq d^*(i)$, see   \cite[Theorem 3.6]{GMRT}. Thus  the minimum distance $d$ can be  estimated by finding $d^*$.

For more details about coding theory, see \cite{vanlint}, \cite{stichtenoth2} and \cite{vanlint2}.

\subsection{The curve $\mathcal{X}_{n,r}$}

 Fix an integer $n\geq 2$. For any $r \in \{\lceil \frac{n}{2} \rceil,\ldots,n-1\}$, with $\gcd(n,r)=1$, define
\begin{equation}\label{eq:f_definition}
f_{r}(x):=T_n\left(x^{1+q^{r}}\right) \mod (x^{q^n}-x),
\end{equation}
 where  $T_n(x)=x+x^q+\ldots +x^{q^{n-1}}$ and consider the curve $\mathcal{X}_{n,r}$ defined over $\F_{q^n}$ with  affine equation given by
\begin{equation} \label{eq Xr curve}
T_n(y)=f_r(x).
\end{equation}
The curve $\mathcal{X}_{n,r}$ has degree $\delta=q^{n-1}+q^{r-1}$, genus $g(\mathcal{X}_{n,r})=q^r(q^{n-1}-1)/2$,  $\# \mathcal{X}_{n,r}(\mathbb{F}_{q^n})=q^{2n-1}+1$ $\F_{q^n}$-rational points, and a single rational point at infinity $Q_{\infty}=(0:1:0)$, see [\cite{c2}, Theorem 2.3]. Moreover, $\mathcal{X}_{n,r}$ is a Castle curve, see \cite[Corollary 3.7]{bst}.

By \cite[Proposition 3.5]{bst}, we have that

\begin{equation} \label{eq XrN curve}
T_n(y)=x^{q^n+q^{n-r}}-x^{q^{n-r}+1}
\end{equation}

is another plane model for the curve $\mathcal{X}_{n,r}$. This model will be used throughout the text.

It is easy to check that the point $P_\infty=(0:1:0)\in \mathcal{X}_{n,r}$, with plane model given by \eqref{eq XrN curve}, corresponds to the point $Q_\infty$ in the plane model given by \eqref{eq Xr curve} (see proof of \cite[Proposition 3.5]{bst}).
In particular, $H(P_\infty)=H(Q_\infty)$ and then \cite[Theorem 3.7]{bst} yields

%

\begin{theorem} \label{theorem H(P) da curva}
Let $H(P_{\infty})$ be the Weierstrass semigroup at $P_{\infty}\in \mathcal{X}_{n,r}$. Then
$$
H(P_{\infty}) = \langle q^{n-1}, q^{n-1}+q^{r-1}, q^{2r-1}+q^{n-r-1}, q^n+q^{n-r}, q^{2r}-q^n+q^r+1 \rangle.
$$
Moreover, $H(P_{\infty})$ is a telescopic semigroup and therefore symmetric.
\end{theorem}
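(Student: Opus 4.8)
The result concerns the curve $\mathcal{X}_{n,r}$, whose unique point at infinity $P_\infty$ in the model \eqref{eq XrN curve} corresponds to $Q_\infty$ in the model \eqref{eq Xr curve}; hence $H(P_\infty)=H(Q_\infty)$, as already observed. The plan is therefore twofold: first, to exhibit the five claimed generators as pole orders of explicit functions, which pins down a subsemigroup $S=\langle q^{n-1},\,q^{n-1}+q^{r-1},\,q^{2r-1}+q^{n-r-1},\,q^n+q^{n-r},\,q^{2r}-q^n+q^r+1\rangle\subseteq H(P_\infty)$; and second, to prove that $S$ is telescopic with exactly $g(\mathcal{X}_{n,r})=q^r(q^{n-1}-1)/2$ gaps, which by the Weierstrass Gap Theorem forces $S=H(P_\infty)$ and simultaneously delivers symmetry.

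For the first part I would read off the two ``visible'' generators from \eqref{eq XrN curve}. Since $P_\infty$ is the only place over $x=\infty$ and is totally ramified of index $q^{n-1}=[\F_{q^n}(\mathcal{X}):\F_{q^n}(x)]$, one gets $v_{P_\infty}(x)=-q^{n-1}$ (consistent with the Castle property $m_2=q^{n-1}$, since $\#\mathcal{X}_{n,r}(\F_{q^n})=m_2q^n+1=q^{2n-1}+1$); balancing the dominant terms $y^{q^{n-1}}$ and $x^{q^n+q^{n-r}}$ in the defining equation then forces $v_{P_\infty}(y)=-(q^n+q^{n-r})$. This yields the generators $q^{n-1}$ and $q^n+q^{n-r}$. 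The remaining three, $q^{n-1}+q^{r-1}$, $q^{2r-1}+q^{n-r-1}$ and $q^{2r}-q^n+q^r+1$, do not lie in $\langle q^{n-1},\,q^n+q^{n-r}\rangle$, so they must arise from genuinely new functions built from $x$ and $y$ by exploiting the additive ($q$-linear) structure of $T_n$ together with the Frobenius identity $T_n(y)^q-T_n(y)=y^{q^n}-y$. Producing functions whose pole orders are \emph{exactly} these three values is the creative crux and the main obstacle; this is precisely the content supplied by \cite[Theorem 3.7]{bst}, which I would invoke for the generating set.

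For the second part I would list the generators in increasing order and verify the telescopic conditions of Definition \ref{telescopic} directly. Writing $d_i=\gcd(a_1,\dots,a_i)$, the gcd chain is built from decreasing powers of $q$, and each membership test $a_i/d_i\in S_{i-1}$ reduces to an elementary base-$q$ digit computation (for instance the reduced value $q^{n-r}+1$ lies in $\langle 1\rangle$, and each subsequent reduced value lies in the semigroup generated by the previously reduced ones). Once telescopicity is checked, Lemma \ref{lemma telescopic} gives symmetry at once and evaluates the largest gap through the closed form $l_g(S)=\sum_i(d_{i-1}/d_i-1)a_i$; this sum telescopes to $q^{n+r-1}-q^r-1$, so the number of gaps is $(l_g(S)+1)/2=q^r(q^{n-1}-1)/2=g(\mathcal{X}_{n,r})$.

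Finally, since $S\subseteq H(P_\infty)$ and $H(P_\infty)$ has precisely $g(\mathcal{X}_{n,r})$ gaps, the equality of gap counts forces $S=H(P_\infty)$, so the semigroup is telescopic and hence symmetric. I expect the two delicate points to be the explicit construction of the three non-obvious generating functions (hence the appeal to \cite{bst}) and the bookkeeping of the telescopic ordering and gcd chain, since the correct telescopic ordering is the increasing one rather than, in general, the order as written.
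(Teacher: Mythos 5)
Your proposal is correct and ends up in essentially the same place as the paper, which offers no independent proof of this statement: it simply identifies $P_\infty$ with $Q_\infty$ across the two plane models (so $H(P_\infty)=H(Q_\infty)$) and quotes \cite[Theorem 3.7]{bst}, exactly the citation you fall back on for the three non-obvious generators. Your surrounding outline (pole orders of $x$ and $y$ giving $q^{n-1}$ and $q^n+q^{n-r}$, then telescopicity plus the gap count $(l_g(S)+1)/2=q^r(q^{n-1}-1)/2=g$ forcing equality) is the standard and correct way the cited result is established, so there is no gap to report.
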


Note that the previous Theorem and \cite[Lemma 3.2, Proposition 3.4]{bst} yield explicit bases for the Riemann-Roch spaces $\mathcal{L}(mP_\infty)$.

%
%
%

\subsection{$q$-polynomials} A polynomial of the form $p(x)=a_{\ell} x^{q^{\ell}}+a_{\ell-1} x^{q^{\ell-1}}+\cdots+a_1 x^q + a_0x$ with coefficients in a field extension  $\mathbb{F}_{q^m}$ of $\mathbb{F}_q$ is called a $q$-polynomial over $\mathbb{F}_{q^m}$ (also known as linearized polynomial). Note that $a_0 \neq 0$ if and only if  $p(x)$ is separable. Moreover, if $F$ is an arbitrary field extension  of $\mathbb{F}_{q}$ and $p(x)$ is a $q$-polynomial, then
$$
\begin{array}{ll}
p(\alpha + \beta)=p(\alpha) + p(\beta), & \mbox{for all } \alpha, \beta \in F, \mbox{ and}\\
p(c \beta) = c \mbox{ } p(\beta), & \mbox{for all } c \in \mathbb{F}_q \mbox{ and all } \beta \in F.
\end{array}
$$

Therefore, if $F$ is any extension field of $\mathbb{F}_{q}$ in which $p(x)$ splits, then the set $R$ of roots of $p(x)$ in $F$ is an $\mathbb{F}_q$-vector space. In this case, if $p(x)=a_{\ell} x^{q^{\ell}}+a_{\ell-1} x^{q^{\ell-1}}+\cdots+a_1 x^q + a_0x$, with $a_{\ell}a_0\neq 0$, we have that $|R|=q^{\ell}$ and thus $R$ is an $\ell$-dimensional  $\mathbb{F}_q$-vector space.

\begin{theorem} \cite[Theorem 3.52]{lidl} \label{teo q_polynomial}
Let $R \subset \mathbb{F}_{q^m}$ be an $\mathbb{F}_q$-vector space. Then for any non-negative integer $k$ the polynomial
$$
\displaystyle p(x) = \prod_{\beta \in R} (x - \beta)^{q^k}
$$
is a $q$-polynomial over $\mathbb{F}_{q^m}$.
\end{theorem}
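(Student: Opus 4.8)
The plan is to reduce to the case $k=0$ and then argue by induction on $\dim_{\mathbb{F}_q} R$. First I would record the structural facts I need about $q$-polynomials: a sum of $q$-polynomials is a $q$-polynomial, an $\mathbb{F}_{q^m}$-scalar multiple of a $q$-polynomial is a $q$-polynomial, and the $q$-th power of a $q$-polynomial is again a $q$-polynomial, since $\left(\sum_i a_i x^{q^i}\right)^q=\sum_i a_i^{q}\, x^{q^{i+1}}$. Iterating the last fact, the $q^k$-th power of a $q$-polynomial is a $q$-polynomial. Since
$$
p(x)=\prod_{\beta\in R}(x-\beta)^{q^k}=\Bigl(\prod_{\beta\in R}(x-\beta)\Bigr)^{q^k}=L(x)^{q^k},\qquad L(x):=\prod_{\beta\in R}(x-\beta),
$$
it therefore suffices to prove that $L(x)$ itself is a $q$-polynomial.

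For this I would induct on $d=\dim_{\mathbb{F}_q} R$. When $d=0$ we have $R=\{0\}$ and $L(x)=x$, which is a $q$-polynomial. For the inductive step, assume the claim for subspaces of dimension $d-1$ and let $R$ have dimension $d\ge 1$. Choose a codimension-one subspace $U\subset R$ and an element $\gamma\in R\setminus U$; since $[R:U]=q$, the space $R$ decomposes into cosets as $R=\bigsqcup_{c\in\mathbb{F}_q}\bigl(U+c\gamma\bigr)$. Writing $L_U(x):=\prod_{u\in U}(x-u)$, the induction hypothesis makes $L_U$ a $q$-polynomial, and grouping the factors of $L$ by coset gives
$$
L(x)=\prod_{c\in\mathbb{F}_q}\ \prod_{u\in U}\bigl(x-u-c\gamma\bigr)=\prod_{c\in\mathbb{F}_q} L_U(x-c\gamma).
$$
Because $L_U$ is additive and $\mathbb{F}_q$-linear (a property of $q$-polynomials already recorded in the excerpt), $L_U(x-c\gamma)=L_U(x)-c\,L_U(\gamma)$. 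Setting $\delta:=L_U(\gamma)$, note $\delta\neq 0$, as the roots of $L_U$ are exactly the elements of $U$ and $\gamma\notin U$.

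It remains to evaluate $\prod_{c\in\mathbb{F}_q}\bigl(L_U(x)-c\delta\bigr)$. Using the identity $\prod_{c\in\mathbb{F}_q}(T-c)=T^q-T$ with $T=Z/\delta$ and clearing $\delta^q$ yields $\prod_{c\in\mathbb{F}_q}(Z-c\delta)=Z^q-\delta^{q-1}Z$; substituting $Z=L_U(x)$ gives
$$
L(x)=L_U(x)^q-\delta^{q-1}\,L_U(x).
$$
Since $L_U$ is a $q$-polynomial, both terms on the right are $q$-polynomials by the structural facts above, hence so is $L(x)$, completing the induction and thus the proof once the reduction $p=L^{q^k}$ is invoked. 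I expect the one delicate step to be the coset decomposition together with the product identity: one must verify that the $q$ cosets $U+c\gamma$ partition $R$ and that the factorization collapses to the clean additive form $L_U^q-\delta^{q-1}L_U$. The rest is bookkeeping about sums, scalar multiples, and $q$-th powers of linearized polynomials.
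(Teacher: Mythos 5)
Your argument is correct. Note, however, that the paper itself gives no proof of this statement: it is quoted verbatim from Lidl--Niederreiter (Theorem 3.52) and used as a black box, so there is no internal proof to compare against. Your induction on $\dim_{\mathbb{F}_q}R$ --- reduce to $k=0$ via $p=L^{q^k}$, split $R$ into the $q$ cosets of a hyperplane $U$, and collapse $\prod_{c\in\mathbb{F}_q}\bigl(L_U(x)-c\delta\bigr)$ to $L_U(x)^q-\delta^{q-1}L_U(x)$ --- is in substance the standard textbook proof found in the cited reference, and every step checks out: the coset decomposition is valid because $R=U\oplus\mathbb{F}_q\gamma$, $\delta=L_U(\gamma)\neq 0$ since the roots of $L_U$ are exactly $U$, and the identity $\prod_{c\in\mathbb{F}_q}(Z-c\delta)=Z^q-\delta^{q-1}Z$ follows from $\prod_{c\in\mathbb{F}_q}(T-c)=T^q-T$. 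The only point worth tightening is your appeal to the additivity ``recorded in the excerpt'': the paper states $p(\alpha+\beta)=p(\alpha)+p(\beta)$ for field \emph{elements}, whereas you use $L_U(x-c\gamma)=L_U(x)-cL_U(\gamma)$ as an identity of \emph{polynomials} in $x$. This is harmless --- it follows directly from $(x+y)^{q^i}=x^{q^i}+y^{q^i}$ in characteristic $p$ and $c^{q^i}=c$ for $c\in\mathbb{F}_q$ --- but it deserves one explicit line rather than a citation of the element-wise property.
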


\begin{proposition} (Euclidean algorithm for $q$-polynomials) \label{algoritmo}
Let $f(x),g(x) \in \F_{q^m}[x]$ be two $q$-polynomials. Then there exist unique $q$-polynomials $Q(x),R(x) \in \F_{q^m}[x]$ such that $f(x)=Q(g(x))+R(x)$ and $0\leq \deg (R(x)) < \deg(g(x))$.
\end{proposition}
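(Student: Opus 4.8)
The plan is to mimic classical polynomial long division, but with ordinary multiplication replaced by composition and with the degree replaced by a ``$q$-degree.'' For a nonzero $q$-polynomial $p(x)=\sum_{i=0}^{d} a_i x^{q^i}$ with $a_d\neq 0$, write $\deg_q(p):=d$, so that the ordinary degree is $\deg(p)=q^{d}$ and comparing ordinary degrees is the same as comparing $q$-degrees. The one structural fact I would establish first is that composition adds $q$-degrees without cancellation: if $P(y)=\sum_i c_i y^{q^i}$ has $\deg_q(P)=t$ and $g$ has $\deg_q(g)=e$ with leading coefficient $b_e$, then the Frobenius identity $(g(x))^{q^{k}}=\sum_j b_j^{q^{k}}x^{q^{j+k}}$ shows that $P(g(x))=\sum_i c_i\, g(x)^{q^i}$ is again a $q$-polynomial whose top term is $c_t b_e^{q^{t}}x^{q^{e+t}}$; since $b_e\neq 0$ there is no cancellation, so $\deg_q(P\circ g)=t+e$. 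I would also record that substitution is additive on the left, $(P_1+P_2)(g(x))=P_1(g(x))+P_2(g(x))$, which is immediate.

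For existence I would induct on $\deg_q(f)=d$, with $g$ fixed of $q$-degree $e$ and leading coefficient $b_e\neq 0$. If $d<e$ (in particular if $f=0$), take $Q=0$ and $R=f$, so that $\deg(R)<\deg(g)$. If $d\geq e$, set $c:=a_d/b_e^{q^{d-e}}\in\mathbb{F}_{q^m}$ and $Q_1(y):=c\,y^{q^{d-e}}$; by the leading-term computation above, $Q_1(g(x))$ has top term $c\,b_e^{q^{d-e}}x^{q^{d}}=a_d x^{q^{d}}$, so $f-Q_1\circ g$ is a $q$-polynomial of $q$-degree strictly less than $d$. By the induction hypothesis there are $q$-polynomials $Q_2,R$ with $f-Q_1\circ g=Q_2\circ g+R$ and $\deg(R)<\deg(g)$; setting $Q:=Q_1+Q_2$ and using left-additivity of substitution gives $f=Q\circ g+R$. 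All coefficients produced remain in $\mathbb{F}_{q^m}$, since that field is closed under the $q$-power map and under division by nonzero elements.

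For uniqueness, suppose $f=Q\circ g+R=Q'\circ g+R'$ with $\deg(R),\deg(R')<\deg(g)$. Then $(Q-Q')\circ g=R'-R$, whose right-hand side has $q$-degree less than $e$. If $Q-Q'\neq 0$, then by the additivity of $q$-degrees under composition the left-hand side would be a nonzero $q$-polynomial of $q$-degree $\deg_q(Q-Q')+e\geq e$, a contradiction; hence $Q=Q'$, and then $R=R'$.

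The only genuinely delicate point is the no-cancellation claim for composition, namely that the leading coefficient $c_t b_e^{q^t}$ of $P\circ g$ is nonzero and really occurs in degree $q^{e+t}$; this is exactly what makes $\deg_q$ behave like a Euclidean norm and simultaneously drives both the termination of the division in the existence part and the contradiction in the uniqueness part. Everything else is the routine bookkeeping of long division, the main subtlety being only that the cancelling factor is $b_e^{q^{d-e}}$ rather than $b_e$, reflecting the Frobenius twist inherent in substituting into $x^{q^{d-e}}$.
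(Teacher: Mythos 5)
Your proof is correct and complete: the no-cancellation fact for the leading term of $P\circ g$ (with the Frobenius-twisted leading coefficient $c_t b_e^{q^t}$) is exactly the right pivot, and both the inductive division step and the uniqueness argument go through as written. The paper itself states this proposition without proof, citing it as standard from the theory of linearized polynomials, and your argument is precisely the standard one being invoked, so there is nothing to reconcile.
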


For further information regarding $q$-polynomials we refer the reader to \cite{lidl}.

Let $n$, $r$, and $T_n(x)$ be  given as previously. The following three results  will be important for the study of subcovers of $\mathcal{X}_{n,r}$ in the next section.

\begin{lemma}\label{quebra}
For any  positive integer $s<n-1$, there exist monic  $q$-polynomials $g(x)$ and $g_s(x)$ in $\F_{q^n}[x]$,  such that $g_s(g(x))=T_n(x)$, $\deg (g_s(x))=q^{s}$, and $g_s(x)$ splits into linear factors over $\F_{q^n}$.
\end{lemma}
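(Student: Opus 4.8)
The plan is to realize $T_n$ as a composition of two $q$-polynomials by exploiting the root space of $T_n$ together with an auxiliary subspace. First I would pin down the splitting behaviour of $T_n(x)=x+x^q+\cdots+x^{q^{n-1}}$ over $\F_{q^n}$. For $\alpha\in\F_{q^n}$ one has $T_n(\alpha)=\mathrm{Tr}_{\F_{q^n}/\F_q}(\alpha)\in\F_q$, so the set $R$ of roots of $T_n$ lying in $\F_{q^n}$ is precisely the kernel of the trace map $\mathrm{Tr}_{\F_{q^n}/\F_q}$. Since the trace is a surjective $\F_q$-linear form, $R$ is an $\F_q$-subspace of $\F_{q^n}$ of dimension $n-1$, whence $|R|=q^{n-1}$. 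As $T_n$ is separable (its linear coefficient is $1$) and $\deg T_n=q^{n-1}=|R|$, it follows that $T_n$ splits completely over $\F_{q^n}$ with root set exactly $R$.

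Next I would construct the inner factor. Because $s$ is a positive integer with $s<n-1$, we have $1\le n-1-s\le n-2$, so one can choose an $\F_q$-subspace $U\subseteq R$ with $\dim_{\F_q}U=n-1-s$. Define $g(x):=\prod_{\gamma\in U}(x-\gamma)$; by Theorem \ref{teo q_polynomial} (taking $k=0$ and the subspace there to be $U\subseteq\F_{q^n}$) this is a monic $q$-polynomial over $\F_{q^n}$, and it is separable of degree $q^{n-1-s}$ with root set $U$. Setting $W:=g(R)$, the map $g$ restricted to $R$ has kernel $U\cap R=U$, so $W$ is an $\F_q$-subspace of $\F_{q^n}$ with $\dim_{\F_q}W=(n-1)-(n-1-s)=s$. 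I then define the outer factor $g_s(x):=\prod_{\beta\in W}(x-\beta)$, which by Theorem \ref{teo q_polynomial} is again a monic $q$-polynomial over $\F_{q^n}$; it has degree $q^s$ and, since its roots $W$ lie in $\F_{q^n}$, it splits into linear factors over $\F_{q^n}$.

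Finally I would verify that $g_s(g(x))=T_n(x)$. The composition $g_s\circ g$ is a monic $q$-polynomial of degree $q^s\cdot q^{n-1-s}=q^{n-1}$. Viewing each $q$-polynomial as the induced $\F_q$-linear map, its root set is $\{\alpha:g(\alpha)\in W\}=g^{-1}(W)=g^{-1}(g(R))=R+\ker g=R+U=R$, where the equality $R+U=R$ uses $U\subseteq R$. Thus $g_s\circ g$ and $T_n$ are monic polynomials of the same degree $q^{n-1}$ with the same root set $R$ of cardinality $q^{n-1}$; being separable, each has all roots simple, and therefore $g_s(g(x))=T_n(x)$.

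The only genuinely delicate point is the root-space computation $g^{-1}(g(R))=R+\ker g$, which is exactly where the hypothesis $U\subseteq R$ is essential: it guarantees that enlarging $R$ by $\ker g=U$ does not change $R$, so that the composite recovers precisely the roots of $T_n$ rather than a strictly larger space. Everything else reduces to dimension counting for $\F_q$-vector spaces and to the fact, supplied by Theorem \ref{teo q_polynomial}, that products of distinct linear factors indexed by an $\F_q$-subspace of $\F_{q^n}$ are monic $q$-polynomials over $\F_{q^n}$.
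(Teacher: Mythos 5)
Your proof is correct and follows essentially the same route as the paper: both start from the $(n-1)$-dimensional $\F_q$-space $R$ of roots of $T_n$, pick a subspace $U\subseteq R$ of dimension $n-1-s$, and take $g$ to be its subspace polynomial via Theorem \ref{teo q_polynomial}. The only difference is in the last step: the paper obtains $g_s$ by invoking the Euclidean algorithm for $q$-polynomials (Proposition \ref{algoritmo}) to divide $T_n$ by $g$, whereas you construct $g_s$ explicitly as the subspace polynomial of $W=g(R)$ and verify $g_s\circ g=T_n$ by comparing root sets --- a slightly longer but self-contained finish that has the minor advantage of making it immediate that $g_s$ splits over $\F_{q^n}$.
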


\begin{proof}
Let $\mathcal{B}=\{\xi_1,\cdots,\xi_{n-1}\} \subset \F_{q^n}$ be a basis for the  $\F_q$-vector space given by the roots
of $T_n(x)=0$. By Theorem \ref{teo q_polynomial}, the subspace generated by any choice of $n-1-s$ elements in  $\mathcal{B}$ gives rise to monic $q$-polynomial $g(x)\in \F_{q^n}[x]$ of degree $q^{n-1-s}$ that divides $T_n(x)$. Thus the statement follows directly from the  Euclidean algorithm for $q$-polynomials.
\end{proof}

\begin{lemma}\label{LemaSqS} Let $g_s(y)= y^{q^s}+a_{s-1}y^{q^{s-1}}+\cdots+a_0y \in \F_{q^n}[y]$ be a separable  $q$-polynomial. Then there exists a polynomial $G(x,y)\in \F_{q^n}[x,y]$, such that
\begin{equation}\label{SqS}
G(x,y)^q-G(x,y)=x^{q^{n}}g_s(y)-yR_s(x)^{q^{n-s}},
\end{equation}
 where  $R_s(x)=(a_0x)^{q^s}+(a_1x)^{q^{s-1}}+\cdots+(a_{s-1} x)^{q}+x$.
 \end{lemma}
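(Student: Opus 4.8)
The plan is to produce $G$ explicitly as a telescoping sum of monomials and then verify that $G^q-G$ collapses onto the desired right-hand side. First I would expand both terms on the right of \eqref{SqS} using $g_s(y)=\sum_{i=0}^{s}a_iy^{q^i}$ (with $a_s=1$) and $R_s(x)=\sum_{i=0}^{s}(a_ix)^{q^{s-i}}$. A direct computation shows
$$R_s(x)^{q^{n-s}}=\sum_{i=0}^{s}a_i^{q^{n-i}}x^{q^{n-i}},$$
so the right-hand side of \eqref{SqS} takes the form
$$\sum_{i=0}^{s}\left(a_i\,x^{q^n}y^{q^i}-a_i^{q^{n-i}}\,x^{q^{n-i}}y\right).$$
The $i=0$ summand vanishes, since $a_0\in\F_{q^n}$ forces $a_0^{q^n}=a_0$, so effectively the index runs over $i=1,\ldots,s$.

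The key observation is that each summand is the difference of two monomials, $x^{q^n}y^{q^i}$ and $x^{q^{n-i}}y$, which are linked by the Frobenius chain $x^{q^{n-i+j}}y^{q^j}$ for $j=0,1,\ldots,i$, because applying $z\mapsto z^q$ raises both exponents by one power of $q$. I would therefore set
$$G_i(x,y):=\sum_{j=0}^{i-1}a_i^{q^{n-i+j}}\,x^{q^{n-i+j}}y^{q^j},\qquad G(x,y):=\sum_{i=1}^{s}G_i(x,y),$$
and check that $G_i^q-G_i$ telescopes. Writing out $G_i^q$ and shifting the summation index by one, the intermediate terms cancel because the coefficient of $x^{q^{n-i+j}}y^{q^j}$ produced by the Frobenius, namely $(a_i^{q^{n-i+j-1}})^{q}=a_i^{q^{n-i+j}}$, matches exactly the coefficient subtracted by $-G_i$.

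The only surviving contributions are the two endpoints, yielding $G_i^q-G_i=a_i^{q^n}x^{q^n}y^{q^i}-a_i^{q^{n-i}}x^{q^{n-i}}y$. The one genuine point to verify — and the main obstacle — is the coefficient at the top of the chain: the Frobenius pushes $a_i^{q^{n-i}}$ up to $a_i^{q^{n-i}\cdot q^{i}}=a_i^{q^{n}}$, and this equals $a_i$ precisely because $a_i\in\F_{q^n}$. This $\F_{q^n}$-rationality is exactly what reconciles the two endpoint coefficients and makes the construction close up; it also guarantees $G\in\F_{q^n}[x,y]$, as each coefficient is a Frobenius power of some $a_i$. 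Summing over $i$ then gives $G^q-G=\sum_{i=1}^{s}\left(a_i\,x^{q^n}y^{q^i}-a_i^{q^{n-i}}\,x^{q^{n-i}}y\right)$, which is precisely $x^{q^n}g_s(y)-yR_s(x)^{q^{n-s}}$, completing the argument.
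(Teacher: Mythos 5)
Your proof is correct and is essentially the paper's argument: the paper constructs the same polynomial $G$ by a recursion $G_1(y)=y$, $G_i(y)=G_{i-1}(y)^q+a_{s-i+1}^{q^{n-s+i-1}}y$ and groups the monomials by powers of $x$, while you group them by the coefficient index $a_i$ and telescope each Frobenius chain explicitly. The resulting polynomial and the verification (including the use of $a_i^{q^n}=a_i$ to close each chain) coincide in substance.
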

\begin{proof}
 Define recursively the following polynomials in  $\F_{q^n}[y]:$

(i) $G_1(y)=y$, and

(ii) $G_i(y)=G_{i-1}(y)^q+a_{s-i+1}^{q^{n-s+i-1}}y$, for $i=2,\ldots,s+1.$

Direct computation shows that $G_{s+1}(y)=g_s(y)$. Moreover,    for
$$G(x,y):= x^{q^{n-1}}G_{s}(y)+ x^{q^{n-2}}G_{s-1}(y)+\cdots +x^{q^{n-s}}G_{1}(y),$$ it follows that
$$G(x,y)^q-G(x,y)=x^{q^{n}}g_s(y)-yR_s(x)^{q^{n-s}},$$
where $R_s(x)=(a_0x)^{q^s}+(a_1x)^{q^{s-1}}+\cdots+(a_{s-1} x)^{q}+x$, which gives the result.
\end{proof}

\begin{proposition}\label{propchave} For $s\geq 1$,  let $g_s(y)=y^{q^s}+a_{s-1}y^{q^{s-1}}+\cdots+a_1y^{q}+a_0y\in \F_{q^n}[y]$ be a separable $q$-polynomial  that splits over  $\F_{q^n}$. Then there exist unique $q$-polynomials $Q(y), U(y)\in \F_{q^n}[y]$  such that

$$Q(y)^{q^{n-r}}=y+U(g_s(y)),$$
and   $\deg (U(y))=q^u\leq n-r-1$.  Moreover, if $s\geq r+1$, then  $Q(y)=y^{r}$ and
$u=n-s$.
\end{proposition}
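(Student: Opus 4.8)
The plan is to reduce the stated identity to divisibility relations among $q$-polynomials and to solve them with the Euclidean algorithm of Proposition \ref{algoritmo}; the uniqueness is of course understood subject to the degree condition $\deg U=q^{u}$ with $u\le n-r-1$, since otherwise the solutions form a whole family. Two structural facts will be used repeatedly. First, raising a $q$-polynomial to the power $q^{n-r}$ gives again a $q$-polynomial: if $P(y)=\sum_i b_i y^{q^i}$ then $P(y)^{q^{n-r}}=\sum_i b_i^{q^{n-r}} y^{q^{i+n-r}}$, all of whose monomials have degree at least $q^{n-r}$ when $P\neq 0$, and $P\mapsto P^{q^{n-r}}$ is injective. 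Second, let $V\subset\F_{q^n}$ be the zero set of $g_s$; since $g_s$ is separable of degree $q^s$ and splits over $\F_{q^n}$, $V$ is an $s$-dimensional $\F_q$-subspace of $\F_{q^n}$, and every $q$-polynomial vanishing on $V$ has the form $P(g_s(y))$. Indeed, dividing it by $g_s$ via Proposition \ref{algoritmo} leaves a remainder of degree $<q^s$ that vanishes on the $q^s$ points of $V$, hence is zero.

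First I would determine $Q$ modulo $g_s$. Evaluating $Q(y)^{q^{n-r}}=y+U(g_s(y))$ at an arbitrary $\beta\in V$ and using $U(0)=0$ gives $Q(\beta)^{q^{n-r}}=\beta$; raising this to the $q^{r}$ power and using $\beta^{q^n}=\beta$ together with $Q(\beta)\in\F_{q^n}$ produces $Q(\beta)=\beta^{q^{r}}$ for all $\beta\in V$. Hence $Q(y)-y^{q^{r}}$ vanishes on $V$, so by the second fact there is a $q$-polynomial $W$ with
\[ Q(y)=y^{q^{r}}+W(g_s(y)). \]

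Next I would substitute this back. Expanding the $q^{n-r}$-th power gives $Q(y)^{q^{n-r}}=y^{q^{n}}+W(g_s(y))^{q^{n-r}}$, whence $U(g_s(y))=(y^{q^{n}}-y)+W(g_s(y))^{q^{n-r}}$. Since $V\subseteq\F_{q^n}$, the $q$-polynomial $y^{q^n}-y$ vanishes on $V$ and therefore equals $T(g_s(y))$ for a unique $q$-polynomial $T$ of degree $q^{n-s}$; writing $T(y)=\sum_{i=0}^{n-s}t_iy^{q^i}$ and comparing coefficients of $y$ in $T(g_s(y))=y^{q^n}-y$ yields $t_0=-a_0^{-1}\neq 0$. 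A direct check shows $W(g_s(y))^{q^{n-r}}=\widetilde W(g_s(y))$ with $\widetilde W(y):=W(y)^{q^{n-r}}$, so, right composition by the separable $g_s$ being injective, we obtain $U(y)=T(y)+\widetilde W(y)$.

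Finally I would impose $\deg U\le q^{n-r-1}$. Every monomial of $\widetilde W$ has degree $\ge q^{n-r}$, so the bound holds if and only if all monomials of $U=T+\widetilde W$ of degree $\ge q^{n-r}$ vanish. Matching coefficients degree by degree (with the convention $t_i:=0$ for $i>n-s$), this reads $w_j^{q^{n-r}}=-t_{n-r+j}$ for $0\le j\le r-s$ and $w_j=0$ otherwise; inverting the Frobenius by raising to $q^{r}$ solves each equation uniquely in $\F_{q^n}$, so $W$, and with it $Q$ and $U$, is uniquely determined. The surviving polynomial is $U(y)=\sum_{i=0}^{n-r-1}t_iy^{q^i}$, of degree $q^u$ with $u\le n-r-1$ and $U(0)=t_0\neq 0$, proving the first assertion. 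When $s\ge r+1$ one has $n-s\le n-r-1$, so $T$ has no monomial of degree $\ge q^{n-r}$, the range $0\le j\le r-s$ is empty, $W=0$, and hence $Q(y)=y^{q^{r}}$, $U=T$, and $u=n-s$, as claimed. The main point to handle carefully is the bookkeeping separating the left Frobenius twist $(\cdot)^{q^{n-r}}$ from the right compositions by $g_s$, together with the coefficientwise verification that the degree constraint fixes $W$ uniquely; everything else reduces to the two structural facts recalled at the outset.
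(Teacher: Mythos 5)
Your proof is correct, and it reaches the result by a somewhat different route than the paper. The paper's argument is existence-first: it applies the Euclidean algorithm for $q$-polynomials (Proposition \ref{algoritmo}) to write $y^{q^{r}}=R(g_s(y))+Q(y)$ with $\deg Q\leq q^{s-1}$, observes that $g_s$ then symbolically divides both $y^{q^{n}}-Q(y)^{q^{n-r}}$ and $y^{q^{n}}-y$, hence divides $Q(y)^{q^{n-r}}-y$, and obtains $U$ from one more division; the degree bound $u\leq n-r-1$ and the case $s\geq r+1$ (where the remainder is $y^{q^{r}}$ itself) fall out of the degree control in the division algorithm, and uniqueness is left implicit in ``the result follows.'' You instead start from the identity itself: evaluating at the roots of $g_s$ forces $Q(\beta)=\beta^{q^{r}}$ on the kernel $V$, so $Q=y^{q^{r}}+W(g_s(y))$, and then the constraint $\deg U\leq q^{n-r-1}$ pins down $W$ coefficient by coefficient via the relation $U=T+\widetilde W$ with $y^{q^{n}}-y=T(g_s(y))$. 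The two constructions necessarily produce the same pair $(Q,U)$ (your uniqueness argument shows the paper's Euclidean remainder is the only admissible $Q$). What the paper's route buys is brevity and an immediate existence proof; what yours buys is a fully explicit uniqueness argument, which the paper glosses over, together with the closed form $U(y)=\sum_{i=0}^{n-r-1}t_iy^{q^{i}}$ and the observation $t_0=-a_0^{-1}\neq 0$, which makes the claim $\deg U=q^{u}$ with $U\neq 0$ transparent. Both arguments rest on the same two pillars: the Euclidean algorithm for $q$-polynomials and the fact that $g_s$ is a symbolic right factor of $y^{q^{n}}-y$.
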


\begin{proof}  By the Euclidean algorithm for $q$-polynomials, there exist unique $q$-polynomials
$Q(y) ,R(y) \in \F_{q^n}[y]$ such that $y^{q^{n-t}}=R(g_s(y))+Q(y)$ and $0<\deg (Q(y)) \leq q^{s-1}$. In particular,
$g_s(y)$ divides $y^{q^{r}}-Q(y)$ and $y^{q^{n}}-Q(y)^{q^{n-r}}$. Since $g_s(y)$ is a factor of $y^{q^n}-y$, it follows that  $g_s(y)$ divides  $Q(y)^{q^{n-r}}-y$. Now the  Euclidean algorithm gives a nonzero $q$-polynomial $U(y)\in \F_{q^n}[y]$ such that $Q(y)^{q^{n-r}}-y=U(g_s(y))$, and the result follows.
\end{proof}

\section{ Subcovers of  the curve $\mathcal{X}_{n,r}$}

For any positive integer $s<n-1$, let $g_s(y)$ be a polynomial of degree
$q^s$ given by Lemma \ref{quebra}, that is, $g_s(y)$ is a monic $q$-polynomial that splits into linear factors over $\F_{q^n}$ and $g_s(g(y))=T_n(y)$ for a monic $q$-polynomial $g(y)$ in $\F_{q^n}[y]$. Consider the curve $\mathcal{X}_{n,r}^s$ over $\mathbb{F}_{q^n}$ defined by
\begin{equation} \label{Maincurve}
\mathcal{X}_{n,r}^s: g_s(y)=x^{q^n+q^{n-r}}-x^{q^{n-r}+1}.
\end{equation}

The following result provides important information regarding curve $\mathcal{X}_{n,r}^s$.

\begin{theorem} \label{genus subcover} The curve $\mathcal{X}_{n,r}^s$  is a subcover of $\mathcal{X}_{n,r}$, and its number of  $\F_{q^n}$-rational points and genus are given by
$\# \mathcal{X}_{n,r}^s(\F_{q^n})=q^{n+s}+1  \text{ and } g(\mathcal{X}_{n,r}^s)=q^r(q^s-1)/2$,
respectively. Moreover,  $P_{\infty}=(0:1:0)$ is the only singular point in the projective closure of $\mathcal{X}_{n,r}^s$, and it corresponds to a unique point
nonsingular model of the curve.
\end{theorem}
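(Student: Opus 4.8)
The plan is to establish the claim in three stages: first prove that $\mathcal{X}_{n,r}^s$ is a subcover of $\mathcal{X}_{n,r}$ by exhibiting an explicit rational map of function fields; then compute the genus and number of rational points; and finally analyze the point $P_\infty$ at infinity. The subcover claim follows almost immediately from Lemma \ref{quebra}: since $g_s(g(y)) = T_n(y)$, the map $\varphi : \mathcal{X}_{n,r} \to \mathcal{X}_{n,r}^s$ defined on function fields by $x \mapsto x$ and $y \mapsto g(y)$ sends the defining equation $T_n(y) = x^{q^n+q^{n-r}} - x^{q^{n-r}+1}$ of $\mathcal{X}_{n,r}$ (in the model \eqref{eq XrN curve}) to the equation $g_s(g(y)) = x^{q^n+q^{n-r}} - x^{q^{n-r}+1}$, which is exactly the defining equation \eqref{Maincurve} of $\mathcal{X}_{n,r}^s$. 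Thus $\mathbb{F}_{q^n}(\mathcal{X}_{n,r}^s) \hookrightarrow \mathbb{F}_{q^n}(\mathcal{X}_{n,r})$ via $y \mapsto g(y)$, $x \mapsto x$, realizing $\mathcal{X}_{n,r}^s$ as a quotient.

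First I would handle the genus via the Riemann–Hurwitz formula applied to the covering $\mathcal{X}_{n,r}^s \to \mathbb{P}^1$ given by $x$, or alternatively by invoking the structure already known for $\mathcal{X}_{n,r}$. The cleanest route is to view $\mathcal{X}_{n,r}^s$ directly as an Artin–Schreier-type extension: the function field is $\mathbb{F}_{q^n}(x, y)$ with $g_s(y) = x^{q^{n-r}+1}(x^{q^n - 1} - 1)$, where $g_s$ is a separable $q$-polynomial of degree $q^s$. Since the right-hand side is a polynomial in $x$ of degree $q^n + q^{n-r}$, and $g_s(y)$ is separable, the extension $\mathbb{F}_{q^n}(x) \subseteq \mathbb{F}_{q^n}(x,y)$ is a $q$-polynomial (additive) extension. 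I would compute the different by analyzing ramification: the only ramification over the $x$-line occurs at $x = \infty$, since the extension is unramified at the finite places (the $q$-polynomial $g_s(y) - h(x) = 0$ has distinct roots wherever $h(x)$ is finite). The genus $g(\mathcal{X}_{n,r}^s) = q^r(q^s - 1)/2$ should then emerge from Riemann–Hurwitz with the conductor at infinity computed from the pole order $q^n + q^{n-r}$ of $h(x)$ and the degree $q^s$; one expects $2g - 2 = q^s(-2) + (\text{different contribution at }\infty)$.

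Next I would count $\mathbb{F}_{q^n}$-rational points. For each of the $q^n$ finite values $x = \alpha \in \mathbb{F}_{q^n}$, the right-hand side $h(\alpha) = \alpha^{q^n + q^{n-r}} - \alpha^{q^{n-r}+1}$ lies in $\mathbb{F}_{q^n}$; since $g_s$ splits completely over $\mathbb{F}_{q^n}$ (by Lemma \ref{quebra}) and is a separable additive polynomial whose image is an $\mathbb{F}_q$-subspace, the equation $g_s(y) = h(\alpha)$ has either $0$ or $q^s$ solutions $y \in \mathbb{F}_{q^n}$ according to whether $h(\alpha)$ lies in the image subspace. The total count should reduce to $q^{n+s} + 1$ (including the single point at infinity) by a careful tally of which values $h(\alpha)$ lie in $\operatorname{Im}(g_s)$; I expect that exactly $q^s$ preimages occur for every $\alpha$, using that $h(\alpha)$ always lands in the correct subspace because $h$ factors through the relevant trace structure inherited from $T_n$. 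This is where the interplay between $g_s$ dividing $T_n$ and the shape of $h(x)$ does the real work.

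\textbf{The main obstacle} will be the analysis of the single point at infinity $P_\infty = (0:1:0)$ and showing it is the unique singular point of the projective closure with a single place above it in the nonsingular model. I would argue that at finite points the affine curve is nonsingular (the partial derivative $\partial/\partial y$ of $g_s(y) - h(x)$ equals $a_0 \neq 0$ by separability, so the Jacobian criterion is satisfied everywhere affine), leaving only $P_\infty$ to examine. To show $P_\infty$ corresponds to a unique place, I would introduce a local parameter and use the known valuation structure at $Q_\infty$ on $\mathcal{X}_{n,r}$ transported through the covering map $\varphi$, together with the fact that $\mathcal{X}_{n,r}$ has a single rational point at infinity (the Castle-curve property). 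Since $\varphi$ is totally ramified over $P_\infty$ — because $y \mapsto g(y)$ and both $x, y$ have poles only at the point at infinity — the uniqueness of the place on $\mathcal{X}_{n,r}$ over infinity forces uniqueness on $\mathcal{X}_{n,r}^s$. Verifying the precise pole orders and that no splitting occurs is the delicate part, and I would lean on the explicit generators of $H(P_\infty)$ implicit in the next section to pin down the local behavior.
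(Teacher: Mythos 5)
Your subcover argument, the nonsingularity of the affine part via the Jacobian criterion, and the uniqueness of the place at infinity via total ramification of the pole of $x$ all match the paper's proof. The point count also goes through, though you leave the key step as an expectation: the "careful tally" is in fact trivial, because $\alpha^{q^n}=\alpha$ for $\alpha\in\F_{q^n}$ forces $h(\alpha)=\alpha^{q^n+q^{n-r}}-\alpha^{q^{n-r}+1}=0$ for \emph{every} finite $\alpha$, and $g_s$ splits with $q^s$ distinct roots in $\F_{q^n}$ by Lemma \ref{quebra}; so each $\alpha$ contributes exactly $q^s$ affine rational points and the total is $q^{n+s}+1$.

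The genuine gap is in the genus computation. You propose to read off the conductor at infinity "from the pole order $q^n+q^{n-r}$ of $h(x)$ and the degree $q^s$," but that pole order is divisible by $p$, so the standard Artin--Schreier different formula does not apply to it, and using it anyway would yield roughly $g\approx\tfrac{(q^s-1)(q^n+q^{n-r}-1)}{2}$ rather than $\tfrac{q^r(q^s-1)}{2}$. The crux of the paper's proof is precisely the Artin--Schreier reduction that your plan omits: one must replace the right-hand side by a $\wp$-equivalent function of pole order coprime to $p$. Concretely, the paper shows (using the polynomial $G(x,y)$ of Lemma \ref{LemaSqS}) that for each $\alpha$ with $R_s(\alpha)=0$ the function $z=G(\alpha,y)+T_{n-r}(\alpha^{q^{r}}x^{q^{r}+1})$ satisfies $z^q-z=-\alpha^{q^n}x^{q^{n-r}+1}-\alpha^{q^{r}}x^{q^{r}+1}$, so the field is the elementary abelian extension $K(x)(\wp^{-1}(A))$ with $A$ consisting of polynomials of degree $q^r+1$ (coprime to $p$, since $r\geq n-r$), and only then does the Garcia--Stichtenoth genus formula give $g=\frac{q^s-1}{p-1}\cdot\frac{q^r(p-1)}{2}=\frac{q^r(q^s-1)}{2}$. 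Without identifying this reduction of the effective pole order from $q^n+q^{n-r}$ to $q^r+1$, your Riemann--Hurwitz computation cannot produce the stated genus.
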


\begin{proof}
Let $g(y)$ be the polynomial given by Lemma \ref{quebra}  such that $g_s(g(y))=T_n(y)$.
Note that the well-defined map $(x,y) \mapsto (x,g(y))$ shows that $\mathcal{X}_{n,r}^s$ is a subcover of $\mathcal{X}_{n,r}$.  Let $K(x)$ be the rational function field over $K=\bar{\F}_q$, and consider the field extension $E/K(x)$, where $E=K(x,y)$ and $x$ and $y$ satisfy \eqref{Maincurve}.   To compute the number of $\F_{q^n}$-rational points on $\mathcal{X}_{n,r}^s$, we first note that the pole of $x$ is totally ramified in $E/K(x)$ (see \cite[Theorem 3.3]{Deo}). That gives us one $\F_{q^n}$-rational point on $\mathcal{X}_{n,r}^s$, namely $P_{\infty}=(0:1:0)$. Lemma \ref{quebra} implies that  each  $\beta\in\F_{q^n}$
gives rise to $q^s$  $\F_{q^n}$-rational points on $\mathcal{X}_{n,r}^s$. Therefore
$\# \mathcal{X}_{n,r}^s(\F_{q^n})=q^{n+s}+1$.

To compute the genus, we rely on facts from  Artin-Schreier theory.
Let  $\wp:Y \mapsto Y^p-Y$  be the Artin-Schreier operator on $K(x)$ and  consider the set
$$
A:=\left\{\alpha^{q^n}x^{q^{r}+1}+\alpha^{q^r}x^{q^{n-r}+1}: R_s(\alpha)=0\right\} \subseteq K(x),
$$
where $R_s(x)$ is the separable polynomial given by Lemma \ref{LemaSqS}. Note that   $A \subseteq K[x]$   is an  additive subgroup of size  $|A|=q^{s}$, and
$$
A\cap \wp (K(x))=\{0\}.
$$
Therefore,  $K(x)(\wp^{-1}(A))$ is an elementary abelian $p$-extension of $K(x)$ of degree $|A|=q^{s}$ \cite[Section 1]{GH}. Now we  show that $K(x)(\wp^{-1}(A))=E$.

Choose $\alpha\in K^*$ such that  $R_s(\alpha)=0$, and  let  $G(x,y)$ be the polynomial given in Lemma \ref{LemaSqS}.  Consider   $z\in E$ given by
$$
z=G(\alpha,y)+T_{n-r}(\alpha^{q^{r}}x^{q^{r}+1}).
$$
Lemma \ref{LemaSqS} gives
\begin{eqnarray*}
z^q-z  &=& \alpha^{q^n}g_s(y)+(\alpha^{q^{r}}x^{q^{r}+1})^{q^{n-r}}-\alpha^{q^{r}}x^{q^{r}+1}\\
     &=& \alpha^{q^n}(x^{q^{n}+q^{n-r}}- x^{q^{n-r}+1})+(\alpha^{q^{r}}x^{q^{r}+1})^{q^{n-r}}-\alpha^{q^{r}}x^{q^{r}+1}\\
      &=&- \alpha^{q^n}x^{q^{n-r}+1}-\alpha^{q^{r}}x^{q^{r}+1} \in A.
\end{eqnarray*}
Also, for  $q=p^e$, we have that $z_t:=z^{p^{e-1}}+\cdots+z^p+z \in E$  satisfies
$$
z_t^p-z_t=z^q-z=- \alpha^{q^n}x^{q^{n-r}+1}-\alpha^{q^{r}}x^{q^{r}+1}.
$$
Therefore, $E$ contains the splitting fields of  $Y^p-Y-a$ for all $a \in A$, and \\
$[E:K(x)]=q^s$ gives  $K(x)(\wp^{-1}(A))=E$. Hence, by \cite[Theorem 2.1]{GH}, the genus of  $\mathcal{X}_{n,r}^s$ is
$$
g(\mathcal{X}_{n,r}^s)=\frac{q^{s}-1}{p-1}\cdot \frac{q^r(p-1)}{2}=\frac{q^r(q^{s}-1)}{2},
$$
which proves the assertion.
The final statement follows directly from the Jacobian criterion and  \cite[Theorem 3.3]{Deo}.
\end{proof}

\begin{lemma} \label{lema vp}
For the functions $x,y \in \F_{q^n}(\mathcal{X}_{n,r}^s)$, we have that
$$
(x)_{\infty}= q^sP_{\infty} \mbox{ and } (y)_{\infty}= (q^n+q^{n-r})P_{\infty}.
$$
\end{lemma}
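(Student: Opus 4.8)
The plan is to read off both pole divisors directly from the defining equation \eqref{Maincurve}, using as the single essential input the total ramification of the pole of $x$ established in the proof of Theorem \ref{genus subcover}.

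First I would settle $(x)_{\infty}$. With the notation $E=K(x,y)$ from Theorem \ref{genus subcover}, the pole of $x$ in the rational function field $K(x)$ is totally ramified in $E/K(x)$ by \cite[Theorem 3.3]{Deo}, so a single place $P_{\infty}$ of $E$ lies above it, with ramification index $e=[E:K(x)]=q^{s}$. Since the pole of $x$ in $K(x)$ has valuation $-1$, this yields $v_{P_{\infty}}(x)=-q^{s}$. Because every pole of $x$ in $E$ lies over the unique pole of $x$ in $K(x)$, the place $P_{\infty}$ is the only pole of $x$, and therefore $(x)_{\infty}=q^{s}P_{\infty}$.

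For $(y)_{\infty}$ I would exploit \eqref{Maincurve}, that is $g_s(y)=x^{q^{n}+q^{n-r}}-x^{q^{n-r}+1}$. The right-hand side has a pole at $P_{\infty}$, hence so does $g_s(y)$, which forces $y$ to have a pole there; conversely any pole of $y$ projects to a pole of $x$ in $K(x)$, so $P_{\infty}$ is again the only pole. To find its order I would compare valuations at $P_{\infty}$. On the left, writing $g_s(y)=y^{q^{s}}+a_{s-1}y^{q^{s-1}}+\cdots+a_0y$ and using $v_{P_{\infty}}(y)<0$, the term $y^{q^{s}}$ has strictly smallest valuation, so $v_{P_{\infty}}(g_s(y))=q^{s}\,v_{P_{\infty}}(y)$. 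On the right, because $q^{n}+q^{n-r}>q^{n-r}+1$ and $v_{P_{\infty}}(x)=-q^{s}<0$, the term $x^{q^{n}+q^{n-r}}$ dominates, giving valuation $-(q^{n}+q^{n-r})q^{s}$. Equating the two sides yields $v_{P_{\infty}}(y)=-(q^{n}+q^{n-r})$, hence $(y)_{\infty}=(q^{n}+q^{n-r})P_{\infty}$.

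The computation is short once total ramification is in hand, so there is no serious obstacle; the only points needing care are the two no-cancellation arguments, namely that the strictly decreasing exponents $q^{s}>q^{s-1}>\cdots>1$ in $g_s$ and the strict inequality $q^{n}+q^{n-r}>q^{n-r}+1$ each produce a unique term of minimal valuation, so that each side inherits the valuation of its leading term, together with the verification that neither $x$ nor $y$ has a pole away from $P_{\infty}$, which is precisely the single-point-at-infinity property recorded in Theorem \ref{genus subcover}.
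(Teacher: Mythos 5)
Your proposal is correct and follows essentially the same route as the paper: total ramification of the pole of $x$ gives $(x)_{\infty}=q^{s}P_{\infty}$, and comparing leading valuations on both sides of the defining equation gives $q^{s}v_{P_{\infty}}(y)=(q^{n}+q^{n-r})v_{P_{\infty}}(x)$. You merely make explicit the no-cancellation checks that the paper leaves implicit.
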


\begin{proof}
From the proof of the Theorem \ref{genus subcover}, observing that $[K(x,y):K(x)]=q^s$ and $P_{\infty}$ is the unique pole of $x$, it follows that $(x)_{\infty}= q^sP_{\infty}$. Now, from (\ref{Maincurve}), it follows that $P_{\infty}$ is the unique pole of $y$. Thus  we have that $q^s v_{P_{\infty}}(y) = (q^{n}+q^{n-r} )v_{P_{\infty}}(x)$, that is, $v_{P_{\infty}}(y) = q^{n}+q^{n-r}$. Therefore, $(y)_{\infty}= (q^n+q^{n-r})P_{\infty}$.
\end{proof}

\begin{theorem}   \label{teo Z}
For $g_s(y)$ as in (\ref{Maincurve}), let  $U(y)$ and $Q(y)$ be given as in Proposition \ref{propchave}, with $\deg (U(y))=q^u$. If $h(x)\in \F_{q^n}[x]$ is  the unique polynomial  for which $U(x^{q^n+q^{n-r}})=h(x)^{q^{n-r}}$ and $Z:=Q(y)-h(x) \in \F_{q^n}(\mathcal{X}_{n,r}^s)$, then
\begin{equation}
(Z)_{\infty}= \begin{cases}
(q^{r}+1)P_{\infty},  \text{ if }\quad s\leq r-u-1,\\
q^{s+u+r-n}(q^{n-r}+1)P_{\infty},  \text{ if }\quad s\geq r-u.
\end{cases}
  \end{equation}

  In particular,
  \begin{equation}\label{particular}
(Z)_{\infty}= \begin{cases}
(q^{r}+1)P_{\infty},  \text{ if }\quad s\leq 2r-n ,\\
(q^{n}+q^r)P_{\infty}, \text{ if }\quad s\geq r+1.
\end{cases}
  \end{equation}
     \end{theorem}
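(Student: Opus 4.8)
The plan is to reduce everything to a single clean identity and then read off the pole order at $P_\infty$ by comparing two valuations that, as I will argue, can never coincide. First I would raise $Z=Q(y)-h(x)$ to the power $q^{n-r}$. By Proposition \ref{propchave} we have $Q(y)^{q^{n-r}}=y+U(g_s(y))$, while the defining property of $h$ gives $h(x)^{q^{n-r}}=U(x^{q^n+q^{n-r}})$. Since $U$ is a $q$-polynomial it is additive, so substituting the curve equation $g_s(y)=x^{q^n+q^{n-r}}-x^{q^{n-r}+1}$ from \eqref{Maincurve} yields $U(g_s(y))=U(x^{q^n+q^{n-r}})-U(x^{q^{n-r}+1})$. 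The two copies of $U(x^{q^n+q^{n-r}})$ cancel and I obtain the key identity
\[
Z^{q^{n-r}}=y-U\!\left(x^{q^{n-r}+1}\right).
\]

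Next I would compute the two valuations on the right-hand side. By Lemma \ref{lema vp}, $v_{P_\infty}(y)=-(q^n+q^{n-r})$ and $v_{P_\infty}(x)=-q^s$. Since $U$ has degree $q^u$, its top-degree term makes the leading monomial of $U(x^{q^{n-r}+1})$ a nonzero scalar times $x^{q^u(q^{n-r}+1)}$, and all other monomials have strictly smaller $x$-degree, so $v_{P_\infty}\!\big(U(x^{q^{n-r}+1})\big)=-q^{s+u}(q^{n-r}+1)=-(q^{s+u+n-r}+q^{s+u})$. Both valuations are negatives of sums of two distinct powers of $q$; by uniqueness of the base-$q$ representation they are equal only if $\{n,n-r\}=\{s+u+n-r,\,s+u\}$, which forces $n=2r$. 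The hypotheses $\gcd(n,r)=1$ and $r\ge\lceil n/2\rceil$, together with $1\le s<n-1$, exclude $n=2r$, so the two valuations are always distinct. Consequently $v_{P_\infty}(Z^{q^{n-r}})$ equals the smaller of them, determined by comparing the top exponents $n$ and $s+u+n-r$: the term $y$ dominates exactly when $s\le r-u-1$, and $U(x^{q^{n-r}+1})$ dominates exactly when $s\ge r-u$; at the boundary $s=r-u$ the top exponents tie at $n$ and the tie is broken at the next exponent using $r>n-r$, which again holds in our range.

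Dividing by $q^{n-r}$ then gives the two displayed values: in the first regime $v_{P_\infty}(Z)=-(q^n+q^{n-r})/q^{n-r}=-(q^r+1)$, and in the second $v_{P_\infty}(Z)=-q^{s+u+r-n}(q^{n-r}+1)$, which are exactly the asserted pole divisors. For the ``in particular'' statement \eqref{particular} I would feed in the extra information about $u$ from Proposition \ref{propchave}. Since $u\le n-r-1$ one has $r-u-1\ge 2r-n$, so $s\le 2r-n$ implies $s\le r-u-1$ and the first case applies, giving $(Z)_\infty=(q^r+1)P_\infty$; and for $s\ge r+1$ the proposition gives $u=n-s$, whence $r-u=r-n+s\le s$ (so the second case applies) and $s+u+r-n=r$, giving $q^{s+u+r-n}(q^{n-r}+1)=q^r(q^{n-r}+1)=q^n+q^r$.

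The step I expect to be the main obstacle is establishing that the two valuations $v_{P_\infty}(y)$ and $v_{P_\infty}\!\big(U(x^{q^{n-r}+1})\big)$ never agree, so that no cancellation can occur in $y-U(x^{q^{n-r}+1})$ and the valuation of the difference is genuinely the minimum of the two; this is precisely where the arithmetic constraints on $n$ and $r$, and the resulting strict inequality $r>n-r$ at the boundary $s=r-u$, are essential.
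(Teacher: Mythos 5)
Your proposal is correct and follows essentially the same route as the paper: raise $Z$ to the power $q^{n-r}$, use Proposition \ref{propchave}, additivity of $U$, and the curve equation to get $Z^{q^{n-r}}=y-U(x^{q^{n-r}+1})$, then compare the two valuations (which differ since $2r\neq n$) and divide by $q^{n-r}$. Your treatment of the boundary case $s=r-u$ and of the ``in particular'' claims matches the paper's, just spelled out in more detail.
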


\begin{proof}  From Proposition $\ref{propchave}$,  we have
$$Z^{q^{n-r}}=y+U(g(y))-U(x^{q^n+q^{n-r}})=y+U(x^{q^n+q^{n-r}}-x^{q^{n-r}+1})-U(x^{q^n+q^{n-r}})=y-U(x^{q^{n-r}+1}).$$
Note that $v_{P_{\infty}}(U(x^{q^{n-r}+1}))=q^u(q^{n-r}+1)v_{P_{\infty}}(x)=-q^{u+s}(q^{n-r}+1)$ and $v_{P_{\infty}}(y)=-q^n-q^{n-r}$. Thus, as $2r\neq n$, we have   $v_{P_{\infty}}(U(x^{q^{n-r}+1}))\neq v_{P_{\infty}}(y)$, and then
\begin{equation*}
q^{n-r}v_{P_{\infty}}(Z)= \begin{cases}
-(q^{r}+1)q^{n-r},  \text{ if }\quad s< r-u ,\\
-(q^{n-r}+1)q^{s+u} \text{ if }\quad s\geq  r-u\;,
\end{cases}
  \end{equation*}
 and the first assertion follows. The second assertion follows from the fact that $2r+1-n \leq r -u\leq r$,  and that $s\geq r+1$ implies $u=n-s$.

\end{proof}

 \begin{theorem}\label{Teo HP}
Let $H(P_{\infty})$ be the Weierstrass semigroup at $P_{\infty} \in \mathcal{X}_{n,r}^s$.
If  $s\leq 2r-n$ then
$$
H(P_{\infty}) = \langle q^{s}, q^{r}+1 \rangle.
$$

If  $s=2r-n+1$ then
\[
H(P_{\infty}) = \begin{cases}
 \langle q^{s}, q^{r}+q^{s-1}, q^{r+1}+q, q^{r+s-1}+1 \rangle,  \text{ if }\quad u=n-r-1 ,\\
\langle q^{s}, q^{r}+1 \rangle,  \text{ otherwise.}
\end{cases}
\]

Moreover, $H(P_{\infty})$ is a telescopic semigroup and therefore symmetric.
\end{theorem}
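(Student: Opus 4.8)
The plan is to realize each proposed generator as the pole order at $P_\infty$ of an explicit function on $\mathcal{X}_{n,r}^s$, so that the numerical semigroup $S$ generated by those integers satisfies $S\subseteq H(P_\infty)$. I would then verify that the proposed generating sequence is telescopic in the sense of Definition \ref{telescopic} and use Lemma \ref{lemma telescopic} to compute its number of gaps $g(S)$. Since the number of gaps of $H(P_\infty)$ equals $g(\mathcal{X}_{n,r}^s)=q^r(q^s-1)/2$ by Theorem \ref{genus subcover}, matching $g(S)$ with this genus forces $\mathbb{N}_0\setminus S=\mathbb{N}_0\setminus H(P_\infty)$, whence $S=H(P_\infty)$; telescopicity then delivers the symmetry statement for free. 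The only genuinely new input beyond this bookkeeping is the construction of the right functions.

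For the case $s\le 2r-n$, and for $s=2r-n+1$ with $u\neq n-r-1$, only two generators are required. By Lemma \ref{lema vp} the coordinate $x$ gives $q^s\in H(P_\infty)$. For $s\le 2r-n$, the particular case \eqref{particular} of Theorem \ref{teo Z} produces the function $Z$ with $(Z)_\infty=(q^r+1)P_\infty$, while for $s=2r-n+1$ with $u\le n-r-2$ one has $s\le r-u-1$, so the first case of Theorem \ref{teo Z} again gives $(Z)_\infty=(q^r+1)P_\infty$; in either situation $q^r+1\in H(P_\infty)$. As $q^r+1\equiv 1\pmod q$, we have $\gcd(q^s,q^r+1)=1$, so $(q^s,q^r+1)$ is a (trivially telescopic) length-two sequence, and Lemma \ref{lemma telescopic} gives $g(\langle q^s,q^r+1\rangle)=(q^s-1)q^r/2=g(\mathcal{X}_{n,r}^s)$. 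Hence $H(P_\infty)=\langle q^s,q^r+1\rangle$.

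The substantial case is $s=2r-n+1$ with $u=n-r-1$. Here $x$ still gives $q^s$, and Theorem \ref{teo Z} now falls in the regime $s\ge r-u$, yielding $(Z)_\infty=q^{s-1}(q^{n-r}+1)P_\infty=(q^r+q^{s-1})P_\infty$, so $q^s,\,q^r+q^{s-1}\in H(P_\infty)$. Since $\gcd(q^s,q^r+q^{s-1})=q^{s-1}>1$, two additional generators are indispensable, and the crux of the proof is to construct functions $Z_3,Z_4\in\F_{q^n}(\mathcal{X}_{n,r}^s)$ with $(Z_3)_\infty=(q^{r+1}+q)P_\infty$ and $(Z_4)_\infty=(q^{r+s-1}+1)P_\infty$. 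I expect these to be produced exactly as $Z$ was in Proposition \ref{propchave} and Theorem \ref{teo Z}, namely by combining Frobenius twists of $Q(y)$ with suitable polynomials in $x$ (equivalently, by a local expansion at $P_\infty$ cancelling leading terms), now tuned so that the surviving pole orders are $q^{r+1}+q$ and $q^{r+s-1}+1$. This is the main obstacle: the needed cancellations are delicate and rely precisely on the hypothesis $u=n-r-1$.

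Granting the four functions, the rest is a mechanical verification. For the sequence $(q^s,\,q^r+q^{s-1},\,q^{r+1}+q,\,q^{r+s-1}+1)$ one computes $d_1=q^s$, $d_2=q^{s-1}$, $d_3=q$, $d_4=1$, and checks the telescopic condition at each step: $q^{n-r}+1\in\mathbb{N}_0$, then $q^r+1\in\langle q,\,q^{n-r}+1\rangle$, and finally $q^{r+s-1}+1\in\langle q^{s-1},\,q^{r-1}+q^{s-2},\,q^r+1\rangle$, each membership being an explicit non-negative combination that uses the standing inequalities $\lceil n/2\rceil\le r\le n-1$ (which give $2r\ge n$, $s=2r-n+1\le r$, and $s\ge 2$). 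Lemma \ref{lemma telescopic} then yields $l_g(S)=q^{r+s}-q^r-1$ and hence $g(S)=q^r(q^s-1)/2$, exactly $g(\mathcal{X}_{n,r}^s)$. Therefore $H(P_\infty)=\langle q^s,\,q^r+q^{s-1},\,q^{r+1}+q,\,q^{r+s-1}+1\rangle$, and by Lemma \ref{lemma telescopic} this semigroup is telescopic and thus symmetric.
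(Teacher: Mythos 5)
Your overall strategy coincides with the paper's: exhibit functions whose pole divisors at $P_\infty$ realize the proposed generators, then match the genus of the resulting telescopic semigroup with $g(\mathcal{X}_{n,r}^s)=q^r(q^s-1)/2$ to force equality. Your treatment of the two-generator cases is complete and correct (including the observation that $u\le n-r-2$ forces $s=2r-n+1\le r-u-1$, so Theorem \ref{teo Z} still yields pole order $q^r+1$), and your telescopic/gap computation for the four-generator sequence checks out. However, there is a genuine gap exactly where you flag "the main obstacle": you never construct the functions with pole divisors $(q^{r+1}+q)P_\infty$ and $(q^{r+s-1}+1)P_\infty$, and without them the inclusion $\langle q^{s}, q^{r}+q^{s-1}, q^{r+1}+q, q^{r+s-1}+1 \rangle\subseteq H(P_\infty)$ --- the entire content of the case $s=2r-n+1$, $u=n-r-1$ --- is unproved. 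Saying you "expect" them to arise from tuned cancellations is a plan, not a proof; these two constructions are precisely what the paper's argument consists of.

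For comparison, the paper obtains them as follows. Splitting off the leading term of $U$, write $U(x)=U_1(x)^{q^{n-r-1}}+U_2(x)$ with $U_1(x)^{q^{n-r-1}}=a_ux^{q^{n-r-1}}$ and $\deg U_2\le q^{n-r-2}$; then $W:=Z^q+U_1(x^{q^{n-r}+1})$ satisfies $W^{q^{n-r-1}}=y-U_2(x^{q^{n-r}+1})$, and since $v_{P_\infty}(U_2(x^{q^{n-r}+1}))\ge -(q^{n-1}+q^{r-1})$ while $v_{P_\infty}(y)=-(q^n+q^{n-r})$, one reads off $v_{P_\infty}(W)=-(q^{r+1}+q)$. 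For the last generator, choosing $\alpha,\beta\in\F_{q^n}^*$ with $-a_u^{-1}=\alpha=\beta^{q^{n-r}}$, the function $\Gamma:=\beta x^{q^{r-1}-q^{n-r-1}}Z-W^{q^{2r-n-1}}$ is arranged so that, after raising to the $q^{n-r}$-th power, the two terms of valuation $-(q^{n-r}+q^{2r})q^{n-r}$ cancel; what survives is governed by $y^{q^{2r-n}}-x^{q^{n-1}+q^{n-r-1}}$, whose valuation $-(q^{2r-1}+q^{r-1})$ is computed from the curve equation, giving $v_{P_\infty}(\Gamma)=-(q^{r+s-1}+1)$. This cancellation is exactly where the hypothesis $u=n-r-1$ enters (it determines the leading coefficient $a_u$ used to define $\beta$), so your instinct about where the difficulty lies is right --- but the step must actually be carried out for the theorem to be proved.
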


\begin{proof}

Let us  write
\begin{equation}\label{eqq1}
U(x)=U_1(x)^{q^{n-r-1}}+U_2(x),
\end{equation}
where $U_1(x)^{q^{n-r-1}}=a_ux^{q^{n-r-1}}$, and  $\deg U_2(x)\leq q^{n-r-2}$.  Defining $W:=Z^q+U_1(x^{q^{n-r}+1})$, where $Z$ is the function given in Theorem \ref{teo Z}, equation  \eqref{eqq1} together with $Z^{q^{n-r}}=y-U(x^{q^{n-r}+1})$ gives $W^{q^{n-r-1}}=y-U_2(x^{q^{n-r}+1})$, and thus
\begin{equation}
 \begin{cases}
v_P(U_2(x^{q^{n-r}+1}))\geq -(q^{n-1}+q^{r-1})\\
v_P(y)=-(q^{n}+q^{n-r})
\end{cases}
\end{equation}
implies $v_P(W)=-(q^{r+1}+q).$ Now let $\alpha, \beta \in \mathbb{F}_{q^n}^*$ be the unique elements for which
$-a_u^{-1}=\alpha=\beta^{q^{n-r}}$ and consider  the function
$$\Gamma:=\beta x^{q^{r-1}-q^{n-r-1}}\cdot Z -W^{q^{2r-n-1}}.$$
We have
\begin{eqnarray*}
\Gamma^{q^{n-r}} & = & \alpha x^{q^{n-1}-q^{2(n-r)-1}}\cdot Z^{q^{n-r}}-(W^{q^{n-r-1}})^{q^{2r-n}}\\
& = & \alpha x^{q^{n-1}-q^{2(n-r)-1}}\cdot (y-U(x^{q^{n-r}+1}))- (y-U_2(x^{q^{n-r}+1}))^{q^{2r-n}}\\
& = & ( y^{q^{2r-n}} - x^{q^{n-1}+q^{n-r-1}})+p(x)+(\alpha  y x^{q^{r+u}-q^{n-r+u}}),
\end{eqnarray*}
where $$p(x):=-\alpha x^{q^{n-1}-q^{2(n-r)-1}}\cdot U_2(x^{q^{n-r}+1})-U_2(x^{q^{n-r}+1})^{q^{2r-n}}.$$
One  can easily check that
\begin{equation}\label{ineqq}
 \begin{cases}
 v_P(y x^{q^{n-1}-q^{2(n-r)-1}})=-(q^{n-r}+q^{2r})\\
  v_P( p(x))=-q^{2r-n+1}\cdot \deg p(x) \geq -q^{2r-n+1}\\
 v_P( y^{q^{2r-n}} - x^{q^{n-1}+q^{n-r-1}})=-(q^{2r-1}+q^{r-1})\;,
\end{cases}
\end{equation}

where the latter case is obtained directly from the identity
$$x^{q^{n}+q^{n-r}}-y^{q^{2r-n+1}} =x^{q^{n-r}+1}+(y^{q^{2r-n}}+\cdots+y^q+y)\;.$$

Now triangle inequality and  \eqref{ineqq} give
$$v_P(\Gamma)=-(q^{3r-n}+1)=-(q^{r+s-1}+1),$$
and the result follows.
\end{proof}

\section{AG codes arising from $\mathcal{X}_{n,r}$ and their subcovers}

In this section, we will study one-point AG codes  arising from  the curve
\begin{equation}\label{newmodel}
\mathcal{X}_{n,r}\\:\\y^{q^{n-1}}+\cdots+y^q+y=x^{q^{n-r}+1}-x^{q^{n}+q^{n-r}}
\end{equation}
and its subcovers.
Let $P_\infty$ be the only rational  point corresponding to  $(0:1:0)$ in  the projective closure of $\mathcal{X}_{n,r}$, and let $P_1, \ldots , P_{q^{2n-1}}$ be the remaining  $q^{2n+1}$ $\mathbb{F}_{q^n}$-rational points on $\mathcal{X}_{n,r}$. Consider the sequence of one-point AG codes $(C_{m}({\mathcal{X}_{n,r}}))_{m\geq 1}$, where $C_{m}({\mathcal{X}_{n,r}}):=C_{\mathcal{L}}({\mathcal{X}_{n,r}},D, mP_\infty)$, with $D=\displaystyle\sum_{i=1}^{q^{2n-1}}P_i$. Let $k_{m,r}$ and  $d_{m,r}$ be the dimension and the minimum distance of $C_{m}({\mathcal{X}_{n,r}})$, respectively.

Again, let $H(P_\infty)=\{0= h_1<h_2<\ldots \}$ be the Weierstrass semigroup at $P_\infty$. Define the function $\iota=\iota_{P_\infty}: {\mathbb{N}}_0\rightarrow{\mathbb{N}}$ by $\iota(m)=\max\{ i :h_i\le m \}$. Then, we have that $\iota(m)=\ell (mP_\infty)$, where $\ell(mP_\infty)$ is the dimension of the vector space $\mathcal{L}(mP_\infty)$.

By Corollary 3.8 in \cite{bst}, $\mathcal{X}_{n,r}$ is a Castle curve and then we have the following result.

\begin{proposition}\cite[Proposition 5]{MST}\label{castle-codes}
Let $C_{m}({\mathcal{X}_{n,r}})$ as above. Then,
 \begin{enumerate}
\item  For $m<q^{2n-1}$, the dimension of $C_{m}({\mathcal{X}_{n,r}})$ is $k_{m,r}=\iota(m);$

\item  For $m\ge q^{2n-1},$ $C_{m}({\mathcal{X}_{n,r}})$ is an abundant code of abundance $\iota(m-q^{2n-1})$ and dimension
$k_{m,r}=\iota(m)-\iota(m-q^{2n-1});$

\item The dual of $C_{m}({\mathcal{X}_{n,r}})$ is isometric to $C_{q^{2n-1}+2g-2-m}({\mathcal{X}_{n,r}});$

\item For $1\le m<q^{2n-1},$ $d_{m,r}$ reaches Goppa bound if and only if $d_{q^{2n-1}-m,r}$ does$;$

\item  The minimum distance $d_{q^{2n-1},r}$ of $C_{q^{2n-1}}({\mathcal{X}_{n,r}})$ verifies $d_{q^{2n-1},r}\ge h_2.$
  \end{enumerate}
  \end{proposition}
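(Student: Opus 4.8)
The plan is to derive this statement as the specialization to $\mathcal{X}_{n,r}$ of the general theory of one-point codes on Castle curves: since $\mathcal{X}_{n,r}$ is Castle with $h_2=q^{n-1}$ (so that $h_2q^n+1=q^{2n-1}+1=\#\mathcal{X}_{n,r}(\F_{q^n})$), each item should follow once the right structural identity on divisors is in place. The single fact driving everything is the principal divisor relation $\dv(x^{q^n}-x)=D-q^{2n-1}P_\infty$, obtained by writing $x^{q^n}-x=\prod_{\alpha\in\F_{q^n}}(x-\alpha)$ and combining $(x)_\infty=q^{n-1}P_\infty=h_2P_\infty$ with the total ramification of $P_\infty$. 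In particular $D\sim q^{2n-1}P_\infty$, and I set $u:=\deg D=q^{2n-1}$.

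First I would prove (1) and (2) from the standard dimension formula $\dim C_m({\mathcal{X}_{n,r}})=\ell(mP_\infty)-\ell(mP_\infty-D)$. Since $D\sim q^{2n-1}P_\infty$, we get $mP_\infty-D\sim(m-q^{2n-1})P_\infty$, whence $\ell(mP_\infty-D)=0$ for $m<q^{2n-1}$ and $\ell(mP_\infty-D)=\iota(m-q^{2n-1})$ for $m\ge q^{2n-1}$, using $\iota(j)=\ell(jP_\infty)$. This yields both the injectivity of the evaluation map (hence $k_{m,r}=\iota(m)$) and the abundance count. For (3) I would use the duality $C_{\mathcal{L}}(\mathcal{X},D,G)^{\perp}=C_\Omega(\mathcal{X},D,G)$ together with the identity $C_\Omega(\mathcal{X},D,G)=C_{\mathcal{L}}(\mathcal{X},D,D-G+(\eta))$ for a differential $\eta$ having a simple pole with the same nonzero residue at every $P_i$. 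The differential $\eta=dx/(x^{q^n}-x)$ does this, since at the simple zeros of $x^{q^n}-x$ its residues are equal nonzero constants, and $(\eta)=(dx)-D+q^{2n-1}P_\infty$. Because $H(P_\infty)$ is symmetric (Theorem \ref{theorem H(P) da curva}), $(2g-2)P_\infty$ is canonical, so $(dx)\sim(2g-2)P_\infty$; substituting gives $D-mP_\infty+(\eta)\sim(q^{2n-1}+2g-2-m)P_\infty$, and codes with linearly equivalent $G$ (same $D$) are monomially equivalent, proving the claimed isometry.

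Item (4) I would handle by a complementation trick built on the same identity. A codeword of $C_m({\mathcal{X}_{n,r}})$ meeting the Goppa bound has weight $u-m$, which forces an $f\in\mathcal{L}(mP_\infty)$ with $\dv(f)=\sum_{i\in S}P_i-mP_\infty$ for an $m$-subset $S$. Then $g:=(x^{q^n}-x)/f$ satisfies $\dv(g)=\sum_{i\notin S}P_i-(u-m)P_\infty$, so $g\in\mathcal{L}((u-m)P_\infty)$ vanishes exactly at the $P_i$ with $i\notin S$ and produces a codeword of $C_{u-m}({\mathcal{X}_{n,r}})$ of weight $u-(u-m)=m$, which meets its Goppa bound; the construction is symmetric, giving the desired equivalence for $1\le m<q^{2n-1}$.

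The hard part will be item (5). The naive zero-counting bound, and even the plain Goppa/designed-distance bound, only give $d_{q^{2n-1},r}\ge 0$ for the top code, because all pole orders up to $u$ already lie in $H(P_\infty)$ once $u-h_2>2g-1$. Thus the inequality $d_{q^{2n-1},r}\ge h_2$ genuinely requires the sharper order (Feng--Rao/$d^*$) bound applied to $C_{q^{2n-1}}({\mathcal{X}_{n,r}})$, exploiting that beyond $2g-1$ the symmetric semigroup $H(P_\infty)$ has no gaps. I expect the real work to lie here, and I would model the estimate on the $d^*$ computation recalled in Section 2, reducing it to the combinatorics of the set $H^{*}=H(P_\infty)\setminus(u+H(P_\infty))$ for the Castle curve $\mathcal{X}_{n,r}$.
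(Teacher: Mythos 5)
First, note that the paper does not prove this proposition at all: it establishes that $\mathcal{X}_{n,r}$ is a Castle curve (via \cite[Corollary 3.8]{bst}) and then imports the statement wholesale from \cite[Proposition 5]{MST}. So you are doing strictly more work than the paper. Your treatment of items (1)--(4) is correct and is essentially the standard Castle-curve argument that underlies the cited result: the driving identity $\dv(x^{q^n}-x)=D-q^{2n-1}P_\infty$ is exactly the right one (each fiber of $x$ over $\F_{q^n}$ consists of $q^{n-1}$ simple rational zeros, and $P_\infty$ is totally ramified), the dimension/abundance counts in (1)--(2) follow from $\dim C_m=\ell(mP_\infty)-\ell(mP_\infty-D)$ as you say, the duality in (3) via $\eta=dx/(x^{q^n}-x)$ is sound (indeed $(dx)=(2g-2)P_\infty$ exactly, since the cover is unramified over the affine line, so you do not even need the symmetry of $H(P_\infty)$ there), and the complementation trick $g=(x^{q^n}-x)/f$ for (4) is precisely the intended argument.

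The genuine gap is item (5), which you explicitly leave as a plan rather than a proof. You are right that the Goppa bound is vacuous for $C_{q^{2n-1}}$, but ``I would model the estimate on the $d^*$ computation'' is not an argument: to extract $d_{q^{2n-1},r}\ge h_2$ from the order bound you must first identify $C_{q^{2n-1}}$ with $C_{m_j}$ for $m_j=\max\{h\in H^*: h\le q^{2n-1}\}$ (here $m_j=q^{2n-1}-1$, since $q^{2n-1}\in u+H(P_\infty)$ is excluded from $H^*$), and then prove that $\#\Lambda_i^*=\#\bigl((m_i+H^*)\cap H^*\bigr)\ge h_2$ for \emph{every} $i\le j$. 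That is a nontrivial combinatorial statement about $H^*=H(P_\infty)\setminus(u+H(P_\infty))$; it is not obvious (for $m_i$ close to $u-1$ the ``cheap'' elements $m_i+c$ with $c\in H^*$ and $m_i+c<u$ contribute only a handful of members to $\Lambda_i^*$, and one must harvest the rest from the part of $H^*$ lying in $u+G(P_\infty)$), and you neither state nor prove it. An alternative elementary route --- from a codeword $ev(f)$ of weight $w$ one shows that $(x^{q^n}-x)/f$ is a nonconstant map of degree exactly $w$, whence $w$ is at least the gonality of $\mathcal{X}_{n,r}$ --- also stalls, because it then requires the gonality to equal $h_2$ rather than merely be bounded above by it. Until one of these two missing inputs is actually established, item (5) is asserted, not proved, and it is exactly the one item of the proposition that does not follow from the divisor identity you set up.
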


The following proposition gives us the true minimum distance of the codes $C_{m}({\mathcal{X}_{n,r}})$ for all $m$, with $0\le m \leq q^{2n-1}$.

\begin{proposition}\label{distance-codes}
Let $d_{m,r}$ be the minimum distance of $C_{m}({\mathcal{X}_{n,r}})$. Then,
\begin{enumerate}
	\item If	$m=aq^{n-1}$ with 	$0\leq a<q^n$, then $d_{m,r}=q^{2n-1}-m$.
	\item If $m=aq^{n-1}+b(q^{n-1}+q^{r-1})$ with $0\leq a\leq q^n-q^{n-1}-q^{r-1}$ and $0\leq b< q^{n-1}$, then $d_{m,r}=q^{2n-1}-m$.
	\item If $m=q^{2n-1}-q^{n-1}+b$ with $0\leq b\leq q^{n-1}$, then $d_{m,r}=q^{n-1}$.
\end{enumerate}
\end{proposition}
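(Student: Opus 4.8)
The plan is to obtain the lower bounds in items (1) and (2) for free from the Goppa bound and to match them by producing explicit minimal-weight codewords, while item (3) is handled separately by a subcode argument together with Proposition \ref{castle-codes}(5). In the ranges stated in (1) and (2) one has $m<q^{2n-1}$ (indeed these ranges are exactly those keeping $m$ below the length), so the Goppa bound gives $d_{m,r}\ge q^{2n-1}-m$ at once. To see that this is sharp it suffices to produce $f\in\mathcal{L}(mP_\infty)$ vanishing simply at exactly $m$ of the affine points $P_1,\dots,P_{q^{2n-1}}$; equivalently, $m$ distinct affine rational points whose sum is linearly equivalent to $mP_\infty$. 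I would carry out this construction in the model \eqref{eq Xr curve} $T_n(y)=f_r(x)$, which is birational to \eqref{newmodel} with the same point $P_\infty$ and the same affine rational points, hence defines the same codes; there the two smallest generators of $H(P_\infty)$ in Theorem \ref{theorem H(P) da curva} are realized by the coordinates, namely $(x)_\infty=q^{n-1}P_\infty$ and $(y)_\infty=(q^{n-1}+q^{r-1})P_\infty$.

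The building blocks are the fibres of these two coordinates. For each $\gamma\in\mathbb{F}_{q^n}$ the fibre $x=\gamma$ consists of the $q^{n-1}$ rational points with $T_n(y)=f_r(\gamma)\in\mathbb{F}_q$, and $x-\gamma$ has this fibre as its zero divisor and $q^{n-1}P_\infty$ as its pole divisor; thus any $a$ such fibres give $aq^{n-1}$ points equivalent to $aq^{n-1}P_\infty$, which settles (1) via $f=\prod_{i=1}^{a}(x-\gamma_i)$ with $a<q^n$. For (2) I would additionally use fibres of $y$: writing $N_t:=\#\{x\in\mathbb{F}_{q^n}:f_r(x)=t\}$ for $t\in\mathbb{F}_q$, the fibre $y=c$ has $N_{T_n(c)}$ points and $y-c$ has pole divisor $(q^{n-1}+q^{r-1})P_\infty$, so such a fibre is totally split precisely when $N_{T_n(c)}=q^{n-1}+q^{r-1}$. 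Granting a value $t_0\in\mathbb{F}_q$ with $N_{t_0}=q^{n-1}+q^{r-1}$, I would choose $b$ distinct $c_1,\dots,c_b$ with $T_n(c_j)=t_0$ (possible since the trace fibre has size $q^{n-1}>b$) and $a$ distinct $\gamma_1,\dots,\gamma_a$ with $f_r(\gamma_i)\neq t_0$. The fibre $x=\gamma_i$ and the fibre $y=c_j$ meet only if $(\gamma_i,c_j)$ lies on the curve, i.e.\ if $f_r(\gamma_i)=T_n(c_j)=t_0$, so this choice makes all $a+b$ fibres pairwise disjoint; since exactly $N_{t_0}=q^{n-1}+q^{r-1}$ values of $\gamma$ are forbidden, there remain $q^n-q^{n-1}-q^{r-1}$ admissible ones, which is precisely the bound $a\le q^n-q^{n-1}-q^{r-1}$ in the statement. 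Then $f=\prod_i(x-\gamma_i)\prod_j(y-c_j)$ has pole divisor $mP_\infty$ with $m=aq^{n-1}+b(q^{n-1}+q^{r-1})$ and exactly $m$ distinct simple affine zeros, so $ev_{\mathcal{L}}(f)$ has weight $q^{2n-1}-m$ and $d_{m,r}=q^{2n-1}-m$.

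For item (3) the lower bound is immediate: since $m\le q^{2n-1}$ we have $\mathcal{L}(mP_\infty)\subseteq\mathcal{L}(q^{2n-1}P_\infty)$, so $C_m(\mathcal{X}_{n,r})$ is a subcode of $C_{q^{2n-1}}(\mathcal{X}_{n,r})$ and hence $d_{m,r}\ge d_{q^{2n-1},r}\ge h_2=q^{n-1}$ by Proposition \ref{castle-codes}(5). For the matching upper bound I would evaluate $f=\prod_{\gamma\neq\gamma_0}(x-\gamma)$, the product over all $\gamma\in\mathbb{F}_{q^n}\setminus\{\gamma_0\}$: it has pole order $(q^n-1)q^{n-1}=q^{2n-1}-q^{n-1}\le m$, so $f\in\mathcal{L}(mP_\infty)$, and it vanishes at every affine point except the $q^{n-1}$ points of the fibre $x=\gamma_0$, whence $ev_{\mathcal{L}}(f)$ has weight exactly $q^{n-1}$.

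The main obstacle is the single arithmetic input used in (2): the existence of $t_0\in\mathbb{F}_q$ whose fibre under $f_r$ is totally split over $\mathbb{F}_{q^n}$, i.e.\ $N_{t_0}=q^{n-1}+q^{r-1}$. A pure counting argument does not suffice, for one computes $\sum_{t\in\mathbb{F}_q}\big((q^{n-1}+q^{r-1})-N_t\big)=q^{r}$, so the total deficiency need not vanish at any single $t$; the conclusion genuinely uses the structure of $f_r(x)=T_n(x^{1+q^{r}})$. I would establish it through the Weil/Gauss-sum evaluation of $\sum_{x\in\mathbb{F}_{q^n}}\tilde\chi(a\,x^{1+q^{r}})$ for the canonical additive character $\tilde\chi$ of $\mathbb{F}_{q^n}$, which also underlies the point count $\#\mathcal{X}_{n,r}(\mathbb{F}_{q^n})=q^{2n-1}+1$ of \cite{c2}, with the exponent $1+q^{r}$ and the condition $\gcd(r,n)=1$ forcing a fibre of maximal size $q^{n-1}+q^{r-1}$.
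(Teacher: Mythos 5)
Your overall route coincides with the paper's: item (1) is proved there exactly as you do (fibres of $x$, each containing $q^{n-1}$ points of $\supp(D)$, with the Goppa bound giving the matching lower estimate), and item (3) by the same combination of monotonicity, Proposition \ref{castle-codes}(5) with $h_2=q^{n-1}$, and the codeword of (1) with $a=q^n-1$. For item (2) the paper likewise multiplies $a$ fibres of $x$ avoiding a value $\gamma\in\F_q^*$ by $b$ fibres $y=\beta_\nu$ with $T_n(\beta_\nu)=\gamma$, asserting that each such fibre contributes $q^{n-1}+q^{r-1}$ distinct points of $\supp(D)$; so you have faithfully reconstructed the argument and, importantly, isolated its one nontrivial input, namely the existence of $t_0\in\F_q$ whose fibre under $f_r$ is totally split, $N_{t_0}=q^{n-1}+q^{r-1}$.

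That input, however, is exactly where your proof has a genuine gap: it is not established, and the Gauss-sum argument you invoke cannot establish it, because the claim is false in general. Take $q=2$, $n=5$, $r=3$, the very curve of Examples \ref{Ex1} and \ref{Ex 2}. For $\alpha\in\F_{32}$ one has $f_3(\alpha)=\mathrm{Tr}_{\F_{32}/\F_2}(\alpha^9)$, and since $\gcd(9,31)=1$ the map $\alpha\mapsto\alpha^9$ permutes $\F_{32}$; hence $N_0=N_1=16$, whereas $q^{n-1}+q^{r-1}=20$. The total deficiency $q^r=8$ is shared evenly between the two fibres and neither is totally split. The same happens whenever $\gcd(q^r+1,q^n-1)=1$ (e.g.\ $q=2$ with $n$ odd): then $x\mapsto x^{q^r+1}$ permutes $\F_{q^n}$ and every fibre of $f_r$ has exactly $q^{n-1}$ elements. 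In all such cases the function $g_1g_2$ has only $aq^{n-1}+bN_{t_0}<m$ zeros in $\supp(D)$, so its codeword has weight strictly larger than $q^{2n-1}-m$ and the construction does not attain the Goppa bound for any $b\geq 1$. The hypothesis $\gcd(n,r)=1$ does not force a maximal fibre — if anything, the character-sum evaluation shows the fibres are as balanced as possible in the cases above. (The paper's own proof of (2) makes the same unproved assertion about the zeros of $g_2$, so you have located the load-bearing step; but closing it requires either a hypothesis guaranteeing $\gcd(q^r+1,q^n-1)>1$ together with an actual determination of the fibre sizes of the quadratic form $\mathrm{Tr}(x^{q^r+1})$, or a different family of functions realizing the pole number $q^{n-1}+q^{r-1}$ with totally split zero sets.)
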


\begin{proof}
\begin{enumerate}
	\item Let $\alpha_1,\ldots,\alpha_a$ be distinct elements in $\mathbb{F}_{q^n}$. Since $(x)_\infty=q^{n-1}P_\infty$, we have that $g=(x-\alpha_1)\cdots (x-\alpha_a)\in \mathcal{L}(mP_\infty)$, and for  each $\alpha\in\mathbb{F}_{q^n}$ the line $x=\alpha$ intersects the affine curve $\mathcal{X}_{n,r}$ at $q^{n-1}$ distinct rational points. Therefore, the codeword $ev_{\mathcal{L}}(g)\in C_{m}({\mathcal{X}_{n,r}})$ has weight $q^{2n-1}-m$.

	\item Fix $\gamma\in\mathbb{F}_q^*$ and let $A=\{\alpha\in\mathbb{F}_{q^n}: \alpha^{q^{n-r}+1}-\alpha^{q^n+q^{n-r}}\neq \gamma\}$. Then $\#A\geq q^n-q^{n-1}-q^{r-1}\geq  a$. Choose distinct elements $\alpha_1,\hdots,\alpha_a\in A$ and define
		$$
	g_1:=\prod_{\mu=1}^a(x-\alpha_\mu).
		$$
Note that $g_1$ has $aq^{n-1}$ distinct zeros in the support of $D$. On the other hand, there exist $q^{n-1}$ distinct elements $\beta\in \mathbb{F}_{q^n}$ such that $\beta^{q^{n-1}}+\beta^{q^{n-2}}+\cdots+\beta=\gamma$. Choose $b$ of them and define a function
		$$
	g_2:=\prod_{\nu=1}^b(y-\beta_{\nu}).
		$$
Therefore, $g_2$ has $b(q^{n-1}+q^{r-1})$ distinct zeros in the support of $D$, all of them being distinct from the zeros of $g_1$. Then $g_1g_2\in \mathcal{L}(mP_\infty)$ has $m$ distinct zeros in the support of $D$, and the corresponding codeword $ev(g_1g_2)$ has weight $q^{2n-1}-m$.
	
	\item From (1), we have $d_{q^{2n-1}-q^{n-1},r}=q^{n-1}$. Therefore $d_{m,r}=q^{r-1}$ by Proposition \ref{castle-codes}.
\end{enumerate}
\end{proof}

\begin{example}
Let $q=2$, $n=4$, and $r=3$. Then, we have the curve $\mathcal{X}_{4,3}: y^8+y^4+y^2+y=x^3-x^{18}$, with $g=28,\#\mathcal{X}_{4,3}=129$ and $H(P_\infty)=\langle 8,12,18,33 \rangle$.

By Proposition \ref{distance-codes}, we can also obtain the true minimum distance of the codes $C_{16}(\mathcal{X}_{4,3})$, $C_{20}(\mathcal{X}_{4,3})$ and $C_{24}(\mathcal{X}_{4,3})$. These codes have, respectively, the fo\-llowing parameters: $[128,4, 112], [128,6, 108],[128,7,104]$, and such parameters have achieved the best known parameters according to \cite{MinT}.

\end{example}

In the following, we present examples of AG codes whose parameters $[n,k,d]$ are better than those appearing in the MinT tables \cite{MinT}.

\begin{example}\label{Ex1}
Let $q=2,n=5$, and $r=3$. Then, we have the curve $\mathcal{X}_{5,3}: y^{16}+y^8+y^4+y^2+y=x^5-x^{36}$ over $\mathbb{F}_{32}$, with $g=60$, $\# \mathcal{X}_{5,3}=513$ and $H(P_\infty)=\langle 16,20,34,41 \rangle$.

Suppose that  $\mathbb{F}_{32}^{*} = \langle a \rangle$. Let $g_s(y)=y^4+a^{18}y^2+ay$ and $g(y)=y^4+a^8y^2+a^{30}y$ be in $\mathbb{F}_{32}[y]$. Then $g_s(g(y))=T_5(y)$ and, by Theorem \ref{genus subcover}, we have that 
$$\mathcal{X}_{5,3}^2:y^4+a^{18}y^2+ay=x^5-x^{36}\;$$
is a subcover of $\mathcal{X}_{5,3}$. Theorem \ref{genus subcover} gives $g(\mathcal{X}_{5,3}^2)=12$ and $\#\mathcal{X}_{5,3}^2(\mathbb{F}_{32})=129$, and Theorem \ref{Teo HP} gives $H(P_\infty)=\langle 4,10,17\rangle$.


Therefore $H^*_2=\{ 0, 4, 8, 10, 12, 14, 16, 17,$ $18, 20, 21,$ $22, 24, 25,$ $26, 27, 28,$ $29, 30, 31,$ $32, 33, 34, 35, 36, 37$, $38, 39, 40,$ $41, 42, 43,$ $44, 45, 46,$ $47, 48, 49,$ $50, 51, 52,$ $53, 54, 55,$ $56, 57, 58,$ $59, 60, 61, 62, 63$, $64, 65, 66,$ $67, 68, 69,$ $70, 71,$ $72, 73, 74, 75,$ $76, 77, 78, 79,$ $80, 81, 82, 83,$ $84, 85, 86,$ $87, 88, 89$, $90, 91, 92,$ $93, 94, 95,$ $96, 97,$ $98, 99, 100,$ $101,$ $102,$ $103,$ $104, 105, 106,$ $107, 108, 109,$ $110, 111$, $112, 113,$ $114, 115, 116, 117,$ $118, 119,$ $120,$ $121,$ $122, 123, 124,$ $125, 126, 127,$ $129, 130, 131$, $133, 134, 135,$ $137, 139, 141,$ $143, 147,$ $151\}$.

Applying bound $d^*$ for the codes $C_{105}(\mathcal{X}_{5,3}^2)$ and $C_{109}(\mathcal{X}_{5,3}^2)$, we see that their res\-pective parameters $[128,94,24]$ and $[128,98,20]$ are better than those appearing in the MinT tables \cite{MinT}.
\end{example}

\begin{remark}
If there exists a linear code over $\mathbb{F}_{q}$ with parameters $[n,k,d]$, then for each non-negative integer $s < k$, there exists a linear code over $\mathbb{F}_{q}$ with parameters $[n-s,k-s,d]$, see \cite[Exercise 7, (iii)]{tsfasman}. From this fact, we see that the codes from the previous example give rise to other 14 codes whose parameters are better than those appearing in the MinT tables \cite{MinT}. In fact, using  $s=1,\ldots,7$ for the codes $C_{105}(\mathcal{X}_{5,3}^2)$ and $C_{109}(\mathcal{X}_{5,3}^2)$.
\end{remark}

\begin{example}\label{Ex 2} For the same particular values used in Example \ref{Ex1}, suppose again that $\mathbb{F}_{32}^{*} = \langle a \rangle$. Let $g_s(y)=y^8+a^{12} y^4+a^{20}y^2+ay$ and $g(y)=y^2+a^{30}y$ be in $\mathbb{F}_{32}[y]$. Thus $g_s(g(y))=T_5(y)$ and, by Theorem \ref{genus subcover}, we have that  
$$ \mathcal{X}_{5,3}^3: y^8+a^{12} y^4+a^{20}y^2+ay=x^5-x^{36}\;$$ is a subcover of $\mathcal{X}_{5,3}$. It follows that $g(\mathcal{X}_{5,3}^3)=28$ and $\#\mathcal{X}_{5,3}^3(\mathbb{F}_{32})=257$. We have that $(x)_\infty=8P_\infty$ and $(y)_\infty=20P_\infty$. Using \textit{MAGMA Computational Algebra System} \cite{magma} we find that $(y^4 + y^2 + y + a^{17}x^{10} + a^{17}x^9)_\infty=18P_\infty$ and $((a^7x^2 + a^{14}x + 1)y^4 + (a^7x^2 + a^{14}x)y^2 + (a^9x^2 + a^{14}x)y + a^{24}x^12 + a^{15}x^{11} + a^{30}x^{10} + a^{17}x^9 + a^{24}x^5)_\infty=25P_\infty$, and then $H(P_\infty)=\langle 8,18,20,25 \rangle$. 

Therefore $H^* = \{  0, 8, 16, 18, 20, 24, 25, 26, 28, 32, 33, 34, 36, 38,$ $40, 41,$ $42, 43,$ $44, 45,$ $46, 48,$ $ 49, 50,$ $51, 52,$ $53, 54,$ $56, 57,$ $58, 59,$ $60, 61,$ $62, 63,$ $64, 65,$ $66, 67,$ $68,$ $69, 70,$ $71, 72,$ $73, 74,$ $75,$ $ 76, 77,$ $78, 79,$ $80, 81,$ $82, 83,$ $84, 85,$ $86, 87,$ $88, 89,$ $90, 91,$ $92, 93,$ $94, 95, 96, 97, 98, 99, 100,$ $ 101, 102, 103, 104, 105, 106, 107, 108, 109, 110, 111, 112,$ $113,$ $114,$ $115,$ $116,$ $117, 118, 119,$ $ 120, 121, 122, 123, 124, 125, 126, 127, 128, 129, 130,$ $131,$ $132,$ $133,$ $134, 135,$ $136, 137, 138,$ $ 139, 140, 141,$ $142, 143, 144,$ $145, 146, 147,$ $148,$ $149,$ $150,$ $151, 152,$ $153,$ $154, 155, 156, 157,$ $ 158, 159, 160,$ $161, 162, 163,$ $164, 165,$ $166,$ $167,$ $168,$ $169,$ $170, 171,$ $172,$ $173, 174,$ $175, 176,$ $ 177, 178, 179,$ $180, 181, 182,$ $183, 184,$ $185,$ $186,$ $187, 188, 189, 190,$ $191, 192, 193, 194, 195,$ $ 196, 197, 198,$ $199, 200, 201,$ $202,$ $203,$ $204,$ $205,$ $206,$ $207,$ $208, 209,$ $210,$ $211, 212, 213, 214,$ $ 215, 216,$ $217, 218,$ $219, 220,$ $221, 222,$ $223, 224, 225,$ $226, 227, 228,$ $229, 230,$ $231, 232, 233,$ $ 234, 235,$ $236,$ $237,$ $238,$ $239,$ $240,$ $241,$ $242, 243,$ $244, 245,$ $246, 247,$ $248, 249,$ $250, 251, 252,$ $ 253, 254,$ $255,$ $257,$ $258,$ $259,$ $260,$ $261, 262, 263,$ $265, 266, 267,$ $268, 269, 270,$ $271, 273, 275,$ $ 277,$ $278,$ $279,$ $283,$ $285,$ $286,$ $287, 291, 293, 295, 303, 311 \}$.

Applying the bound $d^*$ for the codes $C_{201}(\mathcal{X}_{5,3}^3)$, $C_{209}(\mathcal{X}_{5,3}^3)$, $C_{217}(\mathcal{X}_{5,3}^3)$,  and $C_{219}(\mathcal{X}_{5,3}^3)$ we see that their respective parameters $[256,174,56]$, $[256,182,48]$, $[256,190,40]$, and $[256,192,38]$ are better than those appearing in the MinT tables \cite{MinT}. As before, using $s=1,\ldots,24$ for the codes $C_{201}(\mathcal{X}_{5,3}^3)$, $C_{209}(\mathcal{X}_{5,3}^3)$ and $C_{217}(\mathcal{X}_{5,3}^3)$ and using $s=1,\ldots,16$ for the code $C_{219}(\mathcal{X}_{5,3}^3)$, another 88 codes whose parameter are better than those appearing in the MinT tables \cite{MinT} can be constructed.
\end{example}

\begin{remark}
The Weierstrass semigroup at $P_\infty\in \mathcal{X}_{5,3}^3$ is not directly given by Theorem \ref{Teo HP}. Note that Example \ref{Ex 2} indicates that is worth investigating the  Weierstrass semigroup at $P_\infty\in \mathcal{X}_{n,r}^s$ for the case $s>2r-n+1$.
\end{remark}

\section{Aknowlegments}

The first author was supported by FAPESP (Brazil), grant 2017/04681-3.


\begin{thebibliography}{99}

\bibitem{andersen} H. Andersen and O. Geil, {\em Evaluation codes from order domain theory}, Finite Fields and Their Applications, vol. 14, pp. 92-123, 2008.

\bibitem{c2} H. Borges and R. Concei\c c\~ao,  {\em A new family of Castle and Frobenius nonclassical curves}, Journal of Pure and Applied Algebra, vol. 222, no. 4, pp. 994--1002, 2018.

\bibitem{bst} H. Borges, A. Sepúlveda and G. Tizziotti,  {\em Weierstrass semigroup and Automorphism group of the $\mathcal{X}_{n,r}$ curves}, Finite Fields and Their Applications, vol. 36, pp. 121-132, 2015.



\bibitem{magma} W. Bosma, J. Cannon and C. Playoust, {\em The Magma algebra system I: the user language}, J. Symb. Computation, vol. 24, pp. 235-265, 1997.

\bibitem{Deo} V. Deolalikar, {\em Determining irreducibility and ramification groups for an additive extension of the rational function Field}, J. Number Theory, vol. 97, 2002, pp. 269--286.

\bibitem{fanali} S. Fanali and M. Giulietti, {\em One-point AG Codes on the GK Maximal Curves}, IEEE Trans. on Information Theory, vol. 56, no. 1, pp. 202 - 210, 2010.

\bibitem{GH} A. Garcia and H. Stichtenoth,{ \em Elementary abelian p-extensions of algebraic function fields}, Manuscripta Math. vol. 72, 1991, pp. 67--79.



\bibitem{rosales}
     \newblock P. Gárcia-Sánches and J. C. Rosales,
     \newblock \emph{Numerical Semigroups},
     \newblock Springer, Serie: Developments in Mathematics, vol. 20, 2009.

     \bibitem{GMRT} O. Geil, C. Munuera, D. Ruano and F. Torres,
{\em On the order bound for one-point codes}, Advances in Mathematics of Communication, vol. 5, no. 5, pp. 489-504, 2011.


\bibitem{Goppa1} V.D. Goppa, {\em Codes on Algebraic Curves}, Dokl. Akad. Nauk, SSSR, vol. 259, no. 6, pp. 1289-1290, 1981.

\bibitem{Goppa2} V.D. Goppa, {\em Algebraic Geometric Codes}, Izv. Akad. Nauk, SSSR, vol. 46, no. 4, pp. 75-91, 1982.

\bibitem{vanlint}
     \newblock T. H{\o}holdt, J. van Lint and R. Pellikaan,
     \newblock \emph{Algebraic geometry codes}, Elsevier, Handbook of Coding
Theory, \textbf{1}, Amsterdam, 1998.


\bibitem{kirfel} C. Kirfel and R. Pellikaan, {\em The minimum distance of codes in an array coming from telescopic
semigroups}, IEEE Trans. Inform. Theory, vol. 41, pp. 1720-1732, 1995.

\bibitem{lidl} R. Lidl and H. Niederreiter, {\em Introduction to Finite Fields and their Applications}, Cambridge Press, 1986.

\bibitem{gretchen} G. L. Matthews, {\em Weierstrass pairs and minimum distance of Goppa codes}, Designs, Codes and Cryptography, vol. 22, pp. 107-121, 2001.

\bibitem{gretchen2} G. L. Matthews, {\em Codes from the Suzuki function field}, IEEE Trans. on Information Theory, vol. 50, no. 12, pp. 3298-3302, 2004.

\bibitem{MinT}
MinT, {\em Online database for optimal parameters of $(t,m,s)$-nets, $(t,s)$-sequences, orthogonal arrays, and linear codes.} Online available at http://mint.sbg.ac.at.

\bibitem{MST} C. Munuera, A. Sepúlveda and F.Torres,
{\em Algebraic geometry codes from castle curves}, In: Barbero A. (ed.) Coding Theory and Applications, Springer, Heidelberg, pp. 117-127, 2008.

\bibitem{munuera} C. Munuera, G. Tizziotti and F. Torres, {\em Two-Points Codes on Norm-Trace Curves}, In: Barbero A. (ed.) Coding Theory and Applications, Springer, Heidelberg, pp. 128-136, 2008.

\bibitem{ST} A. Sep\'ulveda and G. Tizziotti, {\em Weierstrass Semigroup and codes over the curve $y^q+y=x^{q^r+1}$}, Advances in Mathematics of Communications, vol. 8, no. 1, pp. 67-72, 2014.

\bibitem{stichtenoth2} H. Stichtenoth, {\em Algebraic Function Fields and Codes}, Berlin, Germany: Springer, 1993

\bibitem{vanlint2} J. H. van Lint, {\em Introduction to Coding Theory}, New York: Springer 1982.


\bibitem{tsfasman} M. A. Tsfasman and S. G. Vladut, {\em Algebraic-Geometric Codes}, Amsterdam, The Netherlands: Kluwer, 1991.




\end{thebibliography}
\end{document}